\newtheorem{lem}{Lemma}[section]
\newtheorem{thm}{Theorem}[section]
\newtheorem{alg}{Algorithm}[section]
\newtheorem{exmp}{Example}[section]
\renewcommand{\Re}{\mathrm{Re}\,}
\renewcommand{\Im}{\mathrm{Im}\,}
\newcommand{\pa}{\partial}
\newcommand{\na}{\nabla}
\newcommand{\ga}{\gamma}
\newcommand{\Ga}{\Gamma}
\newcommand{\Om}{\Omega}
\newcommand{\de}{\delta}
\newcommand{\De}{\Delta}
\newcommand{\lam}{\lambda}
\newcommand{\bks}{\backslash}
\newcommand{\pl}{{\rm pl}}
\newcommand{\ps}{{\rm ps}}
\renewcommand{\i}{\mathbf{i}}
\newcommand{\R}{{\mathbb{R}}}
\renewcommand{\Re}{\mathrm{Re}\,}
\renewcommand{\Im}{\mathrm{Im}\,}
\def\eps{{\varepsilon}}
\renewcommand{\i}{\mathbf{i}}
\newcommand{\be}{\begin{eqnarray}}
\newcommand{\ee}{\end{eqnarray}}
\newcommand{\ben}{\begin{eqnarray*}}
\newcommand{\een}{\end{eqnarray*}}
\newcommand{\bee}{\begin{equation}}
\newcommand{\eee}{\end{equation}}
\newcommand{\debproof}{\begin{proof}}
\newcommand{\finproof}{\end{proof}}
\title{Phaseless Imaging by Reverse Time Migration: Acoustic Waves
\thanks{This work is  supported by National Basic Research Project under the grant 2011CB309700 and China NSF under the grants 11021101 and 11321061.}}
\author{Zhiming Chen\thanks{LSEC, Institute of Computational Mathematics and Scientific Engineering Computing,
Academy of Mathematics and System Sciences, Chinese Academy of Sciences, Beijing 100190, P.R. CHINA ({\tt zmchen@lsec.cc.ac.cn}).}
        \and Guanghui Huang\thanks{The Rice Inversion Project, Department of Computational and Applied
Mathematics, Rice University, Houston, TX 77005-1892, USA ({\tt ghhuang@rice.edu}). Previously: LSEC, Institute of Computational Mathematics and Scientific Engineering Computing,
Academy of Mathematics and System Sciences, Chinese Academy of Sciences, Beijing 100190, P.R. CHINA ({\tt ghhuang@lsec.cc.ac.cn}).}}
\begin{document}
\maketitle

\begin{abstract}
We propose a reliable direct imaging method based on the reverse time migration for finding extended obstacles with phaseless total field data. We prove that the imaging resolution of the method is essentially the same as the imaging results using the scattering data with full phase information. The imaginary part of the cross-correlation imaging functional always peaks on the boundary of the obstacle. Numerical experiments are included to illustrate the powerful imaging quality.
\end{abstract}

\section{Introduction}

We consider in this paper inverse scattering problems with phaseless data which aim to find the support of unknown obstacles embedded in a known background medium from the knowledge of the amplitude of the total field measured on a given surface far away from the obstacles. Let the sound soft obstacle occupy a bounded Lipschitz domain $D\subset\R^2$ with $\nu$ the unit outer normal to its boundary $\Ga_D$.
Let $u^i$ be the incident wave and the total field is $u=u^i+u^s$ with $u^s$ being the solution of the following acoustic scattering problem:
\be
& &\De u^s+k^2u^s= 0\ \ \ \ \mbox{in }\R^2\bks\bar{D},\label{p1}\\
&  & u^s=-u^i\ \ \ \ \ \mbox{on }\Ga_D ,\label{p2} \\
& &  \sqrt{r}\Big(\frac{\pa u^s}{\pa r}-\i k u^s\Big)\rightarrow 0  \ \ \mbox{as } \ \ r=|x|\rightarrow+\infty, \label{p3}
\ee
where $k>0$ is the wave number. The condition \eqref{p3} is the outgoing Sommerfeld radiation condition which guarentees the uniqueness of the solution. In this paper, by the radiation or scattering solution we always mean the solution satisfies the Sommerfeld radiation condition \eqref{p3}. For the sake of the simplicity, we mainly consider the imaging of
sound soft obstacles in this paper. Our algorithm does not require any a priori information of the physical properties of the obstacles such as penetrable or non-penetrable, and for non-penetrable obstacles,
the type of boundary conditions on the boundary of the obstacle. The extension of our theoretical results for imaging other types of obstacles will be briefly considered in section 4.

In the diffractive optics imaging and radar imaging systems, it is much easier to measure the intensity of the total field than the phase information of the field \cite{d89, dbc, zll}. It is thus very desirable to develop reliable numerical methods for reconstructing obstacles with only phaseless data, that is, the amplitude information of the total field $|u|$. In recent years, there have been considerable efforts in the literature to solve the inverse scattering problems with phaseless data. One approach is to image the object with the phaseless data directly in the inversion algorithm, see e.g. \cite{dbc, LB}. The other approach is first to apply the phase retrieval algorithm to extract the phase information of the scattering field from the measurement of the intensity and then use the retrieved full field data in the classical imaging algorithms, see e.g. \cite{fda}. We also refer to \cite{bll} for the continuation method and \cite{kr, LK2010, LK2011} for inverse scattering
  problems with the data of the amplitude of the far field pattern. In \cite{k14} some uniqueness results for phaseless inverse scattering problems have been obtained.

The reverse time migration (RTM)  method, which consists of back-propagating the complex conjugated scattering field into the background medium and computing the cross-correlation between the incident wave field and the backpropagated field to output the final imaging profile, is nowadays a standard imaging technique widely used in seismic imaging \cite{bcs}. In \cite{cch_a, cch_e, ch}, the RTM method for reconstructing extended targets using acoustic, electromagnetic and elastic waves at a fixed frequency is proposed and studied. The resolution analysis in \cite{cch_a, cch_e, ch} is achieved without using the small inclusion or geometrical optics assumption previously made in the literature.

In this paper we propose a direct imaging algorithm based on reverse time migration for imaging obstacles
with only intensity measurement $|u|$ with point source excitations. We prove that the imaging resolution of the new algorithm is essentially the same as the imaging results using the scattering data with the full phase information, that is, our imaging function always peaks on the boundary of the obstacles. To the
best knowledge of the authors, our method seems to be the first attempt in applying non-iterative method for reconstructing obstacles with phaseless data except \cite{d89} in which a direct method is considered for imaging a penetrable obstacle under Born approximation using plane wave incidences. We will extend the RTM method studied in this paper for electromagnetic probe waves in a future paper.

The rest of this paper is organized as follows. In section 2 we introduce our RTM algorithm for imaging the obstacle with phaseless data. In section 3 we consider the resolution of our algorithm for imaging sound soft obstacles. In section 4 we extend our theoretical results to non-penetrable obstacles with the impedance boundary condition and penetrable obstacles.
In section 5 we report several numerical experiments to
show the competitive performance of our phaseless RTM algorithm.

\section{Reverse time migration method}
In this section we introduce the RTM method for inverse scattering problems with phaseless data.
Assume that there are $N_s$ emitters and $N_r$ receivers uniformly distributed respectively on $\Ga_s=\pa B_s$ and $\Ga_r=\pa B_r$, where $B_s, B_r$ are the disks of radius $R_s, R_r$ respectively. We denote by $\Om$ the sampling domain in which the obstacle is sought. We assume the obstacle $D\subset\Om$ and $\Om$ is inside $B_s,B_r$.

Let $u^i(x,x_s)=\Phi(x,x_s)$, where $\Phi(x,x_s)=\frac\i 4H^{(1)}_0(k|x-x_s|)$ is the fundamental solution of
the Helmholtz equation with the source at $x_s\in\Ga_s$, be the incident wave and $|u(x_r,x_s)|=|u^s(x_r,x_s)+u^i(x_r,x_s)|$ be the phaseless data received at $x_r\in\Ga_r$, where $u^s(x,x_s)$ is the solution to the problem \eqref{p1}-\eqref{p3} with $u^i(x,x_s)=\Phi(x,x_s)$. We additionally assume that $x_s\neq x_r$ for all $s=1,2,...,N_s, r=1,2,...,N_r$, to avoid the singularity of the incident field $u^i(x,x_s)$ at $x=x_r$. This assumption can be easily satisfied in practical applications.  In the following, without loss of generality, we assume $R_r=\tau R_s, \tau\ge1$.

Our RTM  algorithm consists of  back-propagating the corrected data:
\bee\label{De}
\De(x_r,x_s)=\frac{|u(x_r,x_s)|^2-|u^i(x_r,x_s)|^2}{u^i(x_r,x_s)}
\eee
into the domain using the fundamental solution $\Phi(x_r,z)$ and then computing the imaginary part of the cross-correlation between $u^i(z,x_s)$  and the back-propagated field.

\noindent
\begin{alg} {\sc (RTM for Phaseless data)}
Given the data $|u(x_r,x_s)|=|u^s(x_r,x_s)+u^i(x_r,x_s)|$ which is the measurement of the total field at $x_r\in\Ga_r$ when the point source is emitted at $x_s\in\Ga_s$, $s=1,\dots, N_s$, $r=1,\dots,N_r$. \\
$1^\circ$ Back-propagation: For $s=1,\dots,N_s$, compute the back-propagation field
\bee\label{sback}
v_b(z,x_s)=-\frac{2\pi R_r}{N_r}\sum^{N_r}_{r=1}\Phi(x_r,z)\De(x_r,x_s),\ \ \ \ \forall \ z\in\Om.
\eee
$2^\circ$ Cross-correlation: For $z\in\Om$, compute
\bee\label{scor1}
I(z)=-k^2\Im\left\{\frac{2\pi R_s}{N_s}\sum^{N_s}_{s=1} u^i(z,x_s)v_b(z,x_s)\right\}.
\eee
\end{alg}
It is easy to see that
\bee\label{scor2}
I(z)=
-k^2\Im\left\{\frac{(2\pi)^2 R_s R_r}{N_sN_r}\sum^{N_s}_{s=1}\sum^{N_r}_{r=1}\Phi(z,x_s)\Phi(x_r,z)\De(x_r,x_s)\right\},\ \ \forall \ z\in\Om.
\eee
This is the formula used in our numerical experiments in section 5. By letting $N_s,N_r\to\infty$, we know that \eqref{scor2} can be viewed as an approximation of the following continuous integral:
\bee\label{scor3}
\hat I(z)=-k^2\Im\int_{\Ga_s}\int_{\Ga_r} \Phi(z,x_s)\Phi(x_r,z)\De(x_r,x_s) ds(x_r)ds(x_s),\ \ \forall \ z\in\Om.
\eee

We remark that the above RTM imaging algorithm is the same as the RTM method in \cite{cch_a} except that the input data now is $\De(x_r,x_s)$ instead of $\overline{u^s(x_r,x_s)}$. Hence, the code of the RTM algorithm for imaging the obstacle with phaseless data requires only one line change from the code
of the RTM method for imaging the obstacle with full phase information.

\section{The resolution analysis}

In this section we study the resolution of the Algorithm 2.1. We first introduce some notation.  For any bounded domain $U\subset \R^2$ with Lipschitz boundary $\Ga$, let
$\|u\|_{H^1(U)}=(\|\na \phi\|_{L^2(U)}^2+d_U^{-2}\|\phi\|_{L^2(U)}^2)^{1/2}$ be the weighted $H^1(U)$ norm
and
$\|v\|_{H^{1/2}(\Ga)}=(d_U^{-1}\|v\|_{L^2(\Ga)}^2+|v|_{\frac 12,\Ga}^2)^{1/2}$ be the weighted $H^{1/2}(\Ga)$ norm,
where $d_U$ is the diameter of $U$ and
\ben
|v|_{\frac 12,\Ga}=\left(\int_\Ga\int_\Ga\frac{|v(x)-v(y)|^2}{|x-y|^2}ds(x)ds(y)\right)^{1/2}.
\een
By scaling argument and trace theorem we know that there exists a constant $C>0$ independent of $d_D$ such that for any $\phi\in C^1(\bar D)$,
\bee\label{d00}
\|\phi\|_{H^{1/2}(\Ga_D)}+\|\pa\phi/\pa\nu\|_{H^{-1/2}(\Ga_D)}\le C\max_{x\in\bar D}(|\phi(x)|+d_D|\na\phi(x)|).
%C_1\frac{|U|^{\frac 12}}{|\Ga|}\|v\|_{H^{1/2}(\Ga)}\le
%\inf_{{\phi|_{\Ga}=v,\,\phi\in H^1(U)}}\|\phi\|_{H^1(U)}\le C_2\frac{|U|^{\frac 12}}{d_U}\|v\|_{H^{1/2}(\Ga)}.
\eee
The following stability estimate of the forward acoustic scattering problem is well-known \cite{colton-kress, mclean00}.

\begin{lem}{\label{lem:wp}}
Let~$g \in H^{1/2}(\Ga_D)$, then the scattering problem:
\be
& &\Delta w + k^2 w =0 \qquad\mbox{\rm in } \R^2\bks \bar{D}, \ \ \ \ w = g \ \ \ \ \mbox{\rm on } \Ga_D, \label{ha}\\
& &\sqrt{r}\left(\frac{\pa w}{\pa r}-\i k w\right)\to 0,\ \ \ \ \mbox{as }r\to\infty,\label{ha1}
\ee
admits a unique solution $w \in H^{1}_{\rm loc}(\R^2 \backslash \bar D)$. Moreover, there exists a constant $C>0$ such that $\|\pa w/\pa\nu\|_{H^{-1/2}(\Ga_D)}\le C\|g\|_{H^{1/2}(\Ga_D)}$.
\end{lem}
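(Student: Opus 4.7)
The plan is to treat this as a classical well-posedness result for the exterior Dirichlet problem and to derive the normal-derivative bound as a consequence of the boundedness of the associated Dirichlet-to-Neumann map. Since complete proofs are available in \cite{colton-kress, mclean00}, I would only reproduce the main structural ingredients and indicate where the estimate enters.

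First I would establish uniqueness. Assuming $g=0$, I would pick a ball $B_R\supset\bar D$, apply Green's identity on $B_R\bks\bar D$, and combine it with the radiation condition to obtain $\Im\int_{\pa B_R}\bar w\,\pa w/\pa r\,ds\ge 0$; together with the outgoing asymptotics this forces $\int_{\pa B_\rho}|w|^2\,ds\to 0$ as $\rho\to\infty$. Rellich's lemma then yields $w\equiv 0$ outside a large ball, and unique continuation for $-\De-k^2$ extends this to all of $\R^2\bks\bar D$.

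For existence I would use the combined single/double layer ansatz $w = D\phi-\i\eta S\phi$ with coupling $\eta>0$ and unknown density $\phi\in H^{1/2}(\Ga_D)$. The jump relations reduce \eqref{ha}--\eqref{ha1} to the boundary equation
\bee
(\tfrac{1}{2}I+K-\i\eta S)\phi = g\ \ \mbox{on }\Ga_D,
\eee
where $K$ and $S$ are the standard double- and single-layer boundary operators. The mapping properties of these operators on a Lipschitz boundary (Costabel/Verchota theory) show that the left-hand side is a Fredholm operator of index zero on $H^{1/2}(\Ga_D)$; the uniqueness step forces the kernel to be trivial, and the bounded-inverse theorem gives $\|\phi\|_{H^{1/2}(\Ga_D)}\le C\|g\|_{H^{1/2}(\Ga_D)}$. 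Applying the continuity of the outgoing Neumann trace of the layer potentials then yields $\|\pa w/\pa\nu\|_{H^{-1/2}(\Ga_D)}\le C\|\phi\|_{H^{1/2}(\Ga_D)}\le C\|g\|_{H^{1/2}(\Ga_D)}$, as required.

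The main obstacle is the uniqueness of the integral equation: one must verify that the combined-layer representation avoids the spurious interior Dirichlet eigenvalues of $-\De-k^2$ in $D$. This is precisely the classical reason for introducing $\eta>0$, so once the coupling parameter is incorporated the remaining steps---Fredholm invertibility, bounded inverse, and trace continuity---are routine and the claimed continuous-dependence estimate follows at once.
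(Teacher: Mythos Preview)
Your sketch is correct and follows the standard Brakhage--Werner combined-layer argument together with Rellich's lemma and Fredholm theory on Lipschitz boundaries. Note, however, that the paper does not actually prove this lemma: it is stated as well-known with references to \cite{colton-kress, mclean00}, and what you have written is precisely the argument one finds in those sources.
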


The far field pattern $w^\infty(\hat x)$, where $\hat x=x/|x|\in S^1=\{x\in\R^2:|x|=1\}$, of the solution of the scattering problem \eqref{ha}-\eqref{ha1} is defined as (cf. e.g. \cite[P. 67]{colton-kress}):
\bee\label{far}
w^\infty(\hat x)=\frac{e^{\i\frac\pi 4}}{\sqrt{8\pi k}}\int_{\Ga_D}\left[w(y)\frac{\pa e^{-\i k\hat x\cdot y}}{\pa\nu(y)}-
\frac{\pa w(y)}{\pa\nu(y)}e^{-\i k\hat x\cdot y}\right]ds(y).
\eee
It is well-known that for the scattering solution of \eqref{ha}-\eqref{ha1} (cf. e.g. \cite[Lemma 3.3]{cch_a})
\bee\label{y1}
-\,\Im\int_{\Ga_D}w\frac{\pa\bar w}{\pa\nu}ds=k\int_{S^1}|w^\infty(\hat x)|^2d\hat x.
\eee

Now we turn to the analysis of the imaging function $\hat I(z)$ in \eqref{scor3}. We first observe that
\bee\label{c1}
\De(x_r,x_s) = \overline{u^s(x_r,x_s)} + \frac{|u^s(x_r,x_s)|^2}{u^i(x_r,x_s)}+\frac{u^s(x_r,x_s)\overline{u^i(x_r,x_s)}}{u^i(x_r,x_s)}.
\eee
This yields
\be\label{c3}
\hat I(z)&=&-k^2\Im\int_{\Ga_s}\int_{\Ga_r} \Phi(z,x_s)\Phi(x_r,z)\overline{u^s(x_r,x_s)} ds(x_r)ds(x_s)\nonumber\\
& &-k^2\Im\int_{\Ga_s}\int_{\Ga_r} \Phi(z,x_s)\Phi(x_r,z)\frac{|u^s(x_r,x_s)|^2}{u^i(x_r,x_s)}ds(x_r)ds(x_s)\nonumber\\
& &-k^2\Im\int_{\Ga_s}\int_{\Ga_r} \Phi(z,x_s)\Phi(x_r,z)\frac{u^s(x_r,x_s)\overline{u^i(x_r,x_s)}}{u^i(x_r,x_s)} ds(x_r)ds(x_s).
\ee

The first term is the RTM imaging function with full phase information in \cite{cch_a} and thus can be analyzed by the argument there. Our goal now is to show the last two terms at the right hand side of \eqref{c3} are small. We start with the following lemma.

\begin{lem}\label{lem:4.1} We have $|u^s(x_r,x_s)|\le C(1+kd_D)^2(kR_r)^{-1/2}(kR_s)^{-1/2}$ for any $x_r\in\Ga_r,x_s\in\Ga_s$.
\end{lem}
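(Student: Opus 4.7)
The plan is to use Green's representation formula for $u^s$ in the exterior of $D$, and then bound the two resulting boundary integrals by $H^{\pm 1/2}(\Ga_D)$ duality, estimating the various traces of the fundamental solution and the normal derivative of $u^s$ via \eqref{d00} together with the asymptotic decay of the Hankel function.

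First, since $x_r \notin \bar D$, I would apply Green's representation together with the Sommerfeld condition to write
\begin{equation*}
u^s(x_r,x_s) = \int_{\Ga_D}\Big[u^s(y,x_s)\,\frac{\pa\Phi(x_r,y)}{\pa\nu(y)} - \Phi(x_r,y)\,\frac{\pa u^s(y,x_s)}{\pa\nu(y)}\Big]\,ds(y).
\end{equation*}
Using the sound soft boundary condition $u^s(y,x_s) = -\Phi(y,x_s)$ on $\Ga_D$ and estimating each integral by the $H^{1/2}(\Ga_D)$--$H^{-1/2}(\Ga_D)$ duality yields
\begin{equation*}
|u^s(x_r,x_s)| \le \|\Phi(\cdot,x_s)\|_{H^{1/2}(\Ga_D)}\,\|\pa_\nu\Phi(x_r,\cdot)\|_{H^{-1/2}(\Ga_D)} + \|\Phi(x_r,\cdot)\|_{H^{1/2}(\Ga_D)}\,\|\pa_\nu u^s(\cdot,x_s)\|_{H^{-1/2}(\Ga_D)}.
\end{equation*}

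Next I would estimate the four boundary norms. The first three, involving only $\Phi$, follow directly from \eqref{d00}: for any source point $x_0$ lying at distance $\gtrsim R$ from $\bar D$ (either $R_r$ or $R_s$ in our situation), the asymptotic expansion of $H_0^{(1)}$ and $H_1^{(1)}$ for large argument gives $|\Phi(x_0,y)| \le C(kR)^{-1/2}$ and $|\na_y\Phi(x_0,y)| \le Ck(kR)^{-1/2}$ uniformly for $y \in \bar D$, so \eqref{d00} yields
\begin{equation*}
\|\Phi(x_0,\cdot)\|_{H^{1/2}(\Ga_D)} + \|\pa_\nu\Phi(x_0,\cdot)\|_{H^{-1/2}(\Ga_D)} \le C(1+kd_D)(kR)^{-1/2}.
\end{equation*}
For the remaining factor $\|\pa_\nu u^s(\cdot,x_s)\|_{H^{-1/2}(\Ga_D)}$, I would invoke Lemma~\ref{lem:wp} applied to the boundary datum $g = -\Phi(\cdot,x_s)|_{\Ga_D}$, which bounds it by $C\|\Phi(\cdot,x_s)\|_{H^{1/2}(\Ga_D)} \le C(1+kd_D)(kR_s)^{-1/2}$ as above.

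Collecting the four bounds gives the desired estimate
\begin{equation*}
|u^s(x_r,x_s)| \le C(1+kd_D)^2 (kR_r)^{-1/2}(kR_s)^{-1/2}.
\end{equation*}
No step is truly difficult; the only thing to watch is the bookkeeping of the factors of $k$ generated by $\na_y\Phi$ (which, paired with $d_D$, produce the $(1+kd_D)$ factors) and the verification that \eqref{d00} can be applied directly to $\Phi(x_0,\cdot)$ on $\bar D$ since $x_0 \in \Ga_s \cup \Ga_r$ is well-separated from $\bar D \subset \Om$. The appearance of $(1+kd_D)^2$ rather than $(1+kd_D)$ reflects the fact that two of the four norms produce such a factor (one from the trace of $\Phi$ evaluated at $x_s$ or $x_r$ on $\Ga_D$, one from the trace of its normal derivative or, for the $u^s$ term, from the stability estimate fed by that same trace bound).
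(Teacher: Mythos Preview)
Your proposal is correct and follows essentially the same route as the paper: Green's representation for $u^s$ in the exterior, the $H^{1/2}$--$H^{-1/2}$ duality together with \eqref{d00} to bound the traces of $\Phi(x_r,\cdot)$ and $\Phi(\cdot,x_s)$, and Lemma~\ref{lem:wp} to control $\|\pa_\nu u^s(\cdot,x_s)\|_{H^{-1/2}(\Ga_D)}$ through the Dirichlet datum $-\Phi(\cdot,x_s)$. The only cosmetic difference is that the paper invokes the global Hankel bounds \eqref{b1} rather than the large-argument asymptotics, but since $kR_s,kR_r\gg 1$ is the regime of interest this makes no practical difference.
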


\debproof We first recall the following estimates for Hankel functions \cite[(1.22)-(1.23)]{cg09}:
\bee\label{b1}
|H^{(1)}_0(t)|\le\left(\frac{2}{\pi t}\right)^{1/2},\ \ |H^{(1)}_1(t)|\le\left(\frac 2{\pi t}\right)^{1/2}+\frac 2{\pi t},\ \ \ \ \forall t>0.
\eee
By the integral representation formula, we have
\bee\label{b2}
u^s(x_r,x_s)=\int_{\Ga_D}\left(u^s(y,x_s)\frac{\pa \Phi(x_r,y)}{\pa\nu(y)}-\frac{\pa u^s(y,x_s)}{\pa\nu(y)}\Phi(x_r,y)\right)ds(y).
\eee
By \eqref{d00} we have
\ben
\|\Phi(x_r,\cdot)\|_{H^{1/2}(\Ga_D)}+\|\pa\Phi(x_r,\cdot)/\pa\nu\|_{H^{-1/2}(\Ga_D)}\le C(1+kd_D)(kR_r)^{-1/2}.
\een
The lemma follows now from Lemma \ref{lem:wp} and the fact that $u^i(y,x_s)=-\Phi(y,x_s)$ for $y\in\Ga_D$.
\finproof

\begin{lem}\label{lem:4.2} We have $|H^{(1)}_0(t)|\ge [2/(5\pi e)]|\ln t|$ for any $t\in (0,1)$.
\end{lem}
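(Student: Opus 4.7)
The proof I propose uses $H^{(1)}_0(t)=J_0(t)+\i Y_0(t)$, hence $|H^{(1)}_0(t)|\ge\max(|J_0(t)|,|Y_0(t)|)$, and splits $(0,1)$ at $t=e^{-1}$ according to which of $J_0$ or $Y_0$ is easier to control.

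On the upper subrange $t\in[e^{-1},1)$ we have $|\ln t|\le 1$, so the right-hand side $(2/(5\pi e))|\ln t|$ is bounded by $2/(5\pi e)\approx 0.047$. Since $J_0$ is positive and monotone decreasing on $(0,1]$ (its first positive zero being at $j_{0,1}\approx 2.4$), we have $J_0(t)\ge J_0(1)\approx 0.7652$, which easily dominates $2/(5\pi e)$. This case therefore reduces to a trivial numerical inequality.

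On the lower subrange $t\in(0,e^{-1}]$ we have $|\ln t|\ge 1$, and the plan is to lower-bound $|Y_0(t)|$ via the standard series representation
$$Y_0(t)=\frac{2}{\pi}\bigl[\ln(t/2)+\gamma\bigr]J_0(t)+\frac{2}{\pi}\sum_{k=1}^{\infty}\frac{(-1)^{k+1}H_k}{(k!)^2}\Bigl(\frac{t}{2}\Bigr)^{2k},$$
where $\gamma$ is Euler's constant and $H_k=1+1/2+\cdots+1/k$. Because $\gamma<\ln 2$ and $J_0(t)>0$, the leading term is negative on $(0,1)$, and $|\ln(t/2)+\gamma|=|\ln t|+\ln 2-\gamma\ge|\ln t|$. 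The tail is alternating with decreasing moduli on $(0,1)$, so by the alternating series estimate its sum is bounded in modulus by its first term $(t/2)^2\le 1/(4e^2)$. Combining these yields
$$|Y_0(t)|\ge\frac{2}{\pi}J_0(1)|\ln t|-\frac{2}{\pi}\cdot\frac{1}{4e^2}\ge\frac{2}{\pi}\Bigl(J_0(1)-\frac{1}{4e^2}\Bigr)|\ln t|,$$
where the last step absorbs the additive constant using $|\ln t|\ge 1$. The numerical check $J_0(1)-1/(4e^2)>1/(5e)$ then finishes the proof.

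The main obstacle is purely numerical bookkeeping: one must verify that the correction from the tail of the series and the loss incurred by using $J_0(t)\ge J_0(1)$ together leave enough slack to accommodate the explicit constant $2/(5\pi e)$. Apart from this, the strategy is a straightforward exploitation of the logarithmic singularity of $Y_0$ at the origin, with the factor $e$ in the constant naturally reflecting the splitting point $t=e^{-1}$.
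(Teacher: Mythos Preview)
Your argument is correct, but it takes a genuinely different route from the paper's proof. The paper does not split $H^{(1)}_0$ into real and imaginary parts; instead it invokes the integral representation
\[
H^{(1)}_0(t)=-\frac{2\i}{\pi}\,e^{\i t}\int_0^\infty\frac{e^{-rt}}{r^{1/2}(r-2\i)^{1/2}}\,dr,
\]
substitutes $s=rt$, takes the real part, and uses the crude bound $|s-2\i t|\le 5s$ for $s\ge t$ to reach
\[
|H^{(1)}_0(t)|\ge\frac{2}{5\pi}\int_t^1\frac{e^{-s}}{s}\,ds\ge\frac{2}{5\pi e}\,|\ln t|.
\]
No case distinction on $(0,1)$ and no numerical values of Bessel functions are required; the factor $e^{-1}$ emerges transparently from bounding $e^{-s}\ge e^{-1}$ on $(t,1)$. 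Your approach, by contrast, exploits the decomposition $H^{(1)}_0=J_0+\i Y_0$ together with the standard power series for $Y_0$, and trades the integral representation for a reliance on the value $J_0(1)\approx 0.7652$ and on the alternating structure of the $Y_0$ tail. What you gain is a much sharper implicit constant on the small-$t$ range: your estimate gives $|H^{(1)}_0(t)|\ge\frac{2}{\pi}\bigl(J_0(1)-\tfrac{1}{4e^2}\bigr)|\ln t|\approx 0.466\,|\ln t|$, roughly ten times the stated bound $2/(5\pi e)\approx 0.047$. The paper's argument is more self-contained; yours is more elementary and tighter.
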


\debproof We use the following integral formula \cite{ob73, cg09}
\ben
H^{(1)}_0(t)=-\frac {2\i}{\pi}e^{\i t}\int^\infty_0\frac{e^{-r t}}{r^{1/2}(r-2\i)^{1/2}}dr,\ \ \ \ \forall t>0,
\een
where $\Re(r-2\i)^{1/2}>0$ for $r>0$. By the change of variable
\ben
|H^{(1)}_0(t)|\ge\frac 2\pi\Re\int^\infty_0\frac{e^{-s}}{s^{1/2}(s-2\i t)^{1/2}}ds
&=&\frac 2\pi\int^\infty_0\frac{e^{-s}}{s^{1/2}|s-2\i t|}\sqrt{\frac{|s-2\i t|+s}{2}}ds\\
&\ge&\frac 2\pi\int^\infty_0\frac{e^{-s}}{|s-2\i t|}ds\\
&\ge&\frac 2{5\pi}\int^1_t\frac{e^{-s}}{s}ds,
\een
where in the last inequality we have used $|s-2\i t|\le 5s$ for $s\ge t$.
This completes the proof by noticing that $\int^1_t s^{-1}e^{-s}ds\ge e^{-1}\int^1_t s^{-1}ds= e^{-1}|\ln t|$.
\finproof

The following lemma gives an estimate of the second term at the right hand side of \eqref{c3}.

\begin{lem}\label{lem:4.3} We have
\ben
\left| k^2\int_{\Ga_s}\int_{\Ga_r} \Phi(z,x_s)\Phi(x_r,z) \frac{|u^s(x_r,x_s)|^2}{u^i(x_r,x_s)}  ds(x_r)ds(x_s) \right|\le C(1+kd_D)^4(kR_s)^{-1/2}.
\een
\end{lem}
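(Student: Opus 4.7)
The plan is to take absolute values inside the double integral, estimate each factor pointwise, and then perform the two remaining integrals in turn. First I would bound the fundamental solution factors using the Hankel function estimate \eqref{b1}: since $\Omega$ lies strictly inside both $B_s$ and $B_r$, the distances $|z-x_s|$ and $|z-x_r|$ are comparable to $R_s$ and $R_r$, so $|\Phi(z,x_s)| \le C(kR_s)^{-1/2}$ and $|\Phi(x_r,z)| \le C(kR_r)^{-1/2}$. Lemma \ref{lem:4.1} then supplies $|u^s(x_r,x_s)|^2 \le C(1+kd_D)^4(kR_r)^{-1}(kR_s)^{-1}$. After extracting these uniform bounds, the outer $|\Phi(z,x_s)|$ factor is easily integrated against $|\Gamma_s| = 2\pi R_s$, and the only nontrivial object left is the reciprocal integral $\int_{\Gamma_r} ds(x_r)/|u^i(x_r,x_s)|$.

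The main obstacle is this reciprocal integral, because $u^i(x_r,x_s) = \frac{\i}{4} H_0^{(1)}(k|x_r-x_s|)$ is ill-behaved at both ends: it blows up logarithmically as $|x_r-x_s|\to 0$ (which is possible when $\tau = 1$), and it has slow $(kr)^{-1/2}$ decay with oscillations as $k|x_r-x_s|$ grows. I would split $\Gamma_r$ into the near piece $S_1 = \{x_r : k|x_r-x_s| < 1\}$ and the far piece $S_2 = \Gamma_r \setminus S_1$. On $S_1$, Lemma \ref{lem:4.2} yields $|u^i| \ge C|\ln(k|x_r-x_s|)|$; parameterizing by arc length and changing variable to $r = |x_r-x_s|$ bounds the contribution by $Ck^{-1}\int_0^1 dt/|\ln t|$, which is finite. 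On $S_2$, the standard large-argument asymptotic $|H_0^{(1)}(t)|^2 = (2/(\pi t))(1+O(t^{-2}))$ gives $|u^i| \ge C(k|x_r-x_s|)^{-1/2}$, whence $1/|u^i| \le C\sqrt{kR_r}$ uniformly on $S_2$ (using $|x_r-x_s| \le 2R_r$); integrating this over an arc of length at most $2\pi R_r$ gives a contribution of order $R_r^{3/2}k^{1/2}$.

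Putting these together in the standard regime $kR_r \gtrsim 1$, $\int_{\Gamma_r} ds(x_r)/|u^i(x_r,x_s)| \le CR_r^{3/2}k^{1/2}$. Then the inner integral over $\Gamma_r$ is bounded by $C(1+kd_D)^4 (kR_r)^{-3/2}(kR_s)^{-1} \cdot R_r^{3/2}k^{1/2} = C(1+kd_D)^4 k^{-1}(kR_s)^{-1}$, in which the $R_r$ dependence has cancelled. The outer integral contributes $\int_{\Gamma_s}|\Phi(z,x_s)|\,ds(x_s) \le CR_s(kR_s)^{-1/2}$, and multiplying by the $k^2$ prefactor yields exactly $C(1+kd_D)^4(kR_s)^{-1/2}$. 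The real content of the proof is therefore the two-region lower bound for $|H_0^{(1)}|$, in which Lemma \ref{lem:4.2} handles the near-singularity and the classical Hankel asymptotic handles the oscillatory tail; the remaining accounting of powers of $k$, $R_s$, $R_r$ is routine.
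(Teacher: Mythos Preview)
Your overall strategy---bound the three numerator factors pointwise via \eqref{b1} and Lemma~\ref{lem:4.1}, then control $\int_{\Gamma_r}|u^i(x_r,x_s)|^{-1}\,ds(x_r)$ by a near/far splitting---is exactly the paper's approach, and your final power counting is correct.

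There is, however, a genuine slip in the near-region estimate. You split at $k|x_r-x_s|=1$ and claim that the $S_1$ contribution is bounded by $Ck^{-1}\int_0^1 dt/|\ln t|$, ``which is finite.'' It is not: since $|\ln t|\sim 1-t$ as $t\to 1^-$, the integrand behaves like $(1-t)^{-1}$ and the integral diverges at the right endpoint. The underlying problem is that the lower bound of Lemma~\ref{lem:4.2} degenerates at $t=1$, so it gives no control on $1/|u^i|$ there. Correlatively, your far-region bound $|H_0^{(1)}(t)|^2=(2/(\pi t))(1+O(t^{-2}))$ is only an asymptotic, not a lower bound for moderate $t$ near $1$; the $O(t^{-2})$ correction could in principle kill the leading term. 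So as written, neither half of your argument covers a neighbourhood of $k|x_r-x_s|=1$.

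The paper's fix is to split at $k|x_r-x_s|=\tfrac12$ instead. On $\{t<\tfrac12\}$ Lemma~\ref{lem:4.2} gives $|H_0^{(1)}(t)|\ge C\ln 2>0$, so $1/|u^i|$ is simply bounded and the integral over $S_1$ is $\le C|S_1|\le Ck^{-1}$. On $\{t\ge\tfrac12\}$ the paper invokes the monotonicity of $t\,|H_0^{(1)}(t)|^2$ (Watson) rather than the asymptotic expansion: this yields $|H_0^{(1)}(t)|^2\ge (2t)^{-1}|H_0^{(1)}(\tfrac12)|^2$ for \emph{all} $t\ge\tfrac12$, hence the clean lower bound $|u^i|\ge C(k|x_r-x_s|)^{-1/2}$ you want, uniformly down to the threshold. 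With this adjustment your argument goes through unchanged.
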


\debproof Denote $\Om_k=\{(x_r,x_s)\in\Ga_r\times\Ga_s: |x_r-x_s|<1/(2k)\}$.
Let $x_r=R_r(\cos\theta_r,\sin\theta_r)$, $x_s=R_s(\cos\theta_s,\sin\theta_s)$, where $\theta_r,\theta_s\in [0,2\pi]$. Since $R_r=\tau R_s$, $\tau\ge1$, we obtain easily
\bee\label{y2}
|x_r-x_s|=R_s\sqrt{1+\tau^2-2\tau\cos|\theta_r-\theta_s|}\ge 2R_s\sqrt\tau\sin\frac{|\theta_r-\theta_s|}2
\eee
by Cauchy-Schwarz inequality. Now for $(x_r,x_s)\in\Om_k$, we have then either $\frac{|\theta_r-\theta_s|}2\le\theta_0$ or $\pi-\theta_0\le\frac{|\theta_r-\theta_s|}2\le\pi$, where $\theta_0=\arcsin\frac{1}{4kR_s\sqrt\tau}\in (0,\pi/2)$.

By \eqref{b1} and Lemma \ref{lem:4.1},
\be\label{c4}
& &\left|\int\hskip-5pt\int_{\Om_k}\Phi(z,x_s)\Phi(x_r,z) \frac{|u^s(x_r,x_s)|^2}{u^i(x_r,x_s)}  ds(x_r)ds(x_s)\right|\nonumber\\
&\le&C(1+kd_D)^4(kR_s)^{-3/2}(kR_r)^{-3/2}\int\hskip-5pt\int_{\Om_k}|\ln (k|x_r-x_s|)|ds(x_r)ds(x_s).
\ee
By \eqref{y2} we have
\ben
\int\hskip-5pt\int_{\Om_k}|\ln (k|x_r-x_s|)|ds(x_r)ds(x_s)
&\le&-\int\hskip-5pt\int_{\Om_k}\ln\left(2kR_s\sqrt\tau\sin\frac{|\theta_r-\theta_s|}{2}\right)d\theta_rd\theta_s\\
&\le&2\pi R_rR_s\left|\int^{\theta_0}_0\ln (2kR_s\sqrt\tau\sin t)dt\right|\\
&\le&CR_rR_s,
\een
where we have used integration by parts in obtaining the last inequality. Substituting the above estimate into
\eqref{c4} we obtain
\bee\label{c5}
\left|\int\hskip-5pt\int_{\Om_k}\Phi(z,x_s)\Phi(x_r,z) \frac{|u^s(x_r,x_s)|^2}{u^i(x_r,x_s)}  ds(x_r)ds(x_s)\right|
\le Ck^{-2}(1+kd_D)^4(kR_s)^{-1}.
\eee
Next we estimate the integral in $\Ga_r\times\Ga_s\backslash\bar\Om_k$. Since $t|H_{0}^{(1)}(t)|^2$ is an increasing function of $t >0$ \cite[p. 446]{waston}, we have
for $(x_r,x_s)\in \Ga_r\times\Ga_s\backslash\bar\Om_k$, $|x_r-x_s|\ge 1/(2k)$, and thus
\ben
|x_r-x_s||u^i(x_r,x_s)|^2\ge\frac{1}{32} k^{-1}\left|H_{0}^{(1)}\left(\frac 12\right)\right|^2=Ck^{-1},
\een
which implies by using Lemma \ref{lem:4.1} and \eqref{b1} again that
\bee\label{c6}
\left|\int\hskip-5pt\int_{\Ga_r\times\Ga_s\backslash\bar\Om_k}\Phi(z,x_s)\Phi(x_r,z) \frac{|u^s(x_r,x_s)|^2}{u^i(x_r,x_s)} ds(x_r)ds(x_s) \right| \le Ck^{-2}(1+kd_D)^4(kR_s)^{-1/2}.
\eee
This completes the proof by combining the above estimate with \eqref{c5}.
\finproof

Now we turn to the estimation of the third term at the right hand side of \eqref{c3}. Denote $\de=(kR_s)^{-1/2}$
and $\Theta_\de:=\{(\theta_r,\theta_s)\in (0,2\pi)^2: |\theta_r-\theta_s\pm m\pi|<\de,\,m=0,1,2\}$ and $Q_\de:=\{(x_r,x_s)\in
\Ga_r\times\Ga_s: (\theta_r,\theta_s)\in\Theta_\de\}$.

\begin{lem}\label{lem:4.4} We have
\ben
\left|k^2\int\hskip-5pt\int_{Q_\de} \Phi(z,x_s)\Phi(x_r,z) u^s(x_r,x_s) \frac{\overline{u^i(x_r,x_s)}}{u^i(x_r,x_s)} ds(x_r)ds(x_s)\right|\le C(1+kd_D)^2(kR_s)^{-1/2}.
\een
\end{lem}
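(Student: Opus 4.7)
The proof plan is to combine a pointwise bound on the integrand with a direct estimate of the measure of $Q_\de$, exploiting the fact that the ratio $\overline{u^i(x_r,x_s)}/u^i(x_r,x_s)$ is unimodular (so that on the ``bad set'' $Q_\de$ we do not try to use any cancellation).

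First, I would record the pointwise bounds on the various factors. Since $\Om$ is contained strictly inside $B_s$ and $B_r$, for every $z\in\Om$, $x_s\in\Ga_s$, $x_r\in\Ga_r$ we have $|z-x_s|\gtrsim R_s$ and $|z-x_r|\gtrsim R_r$, so the first inequality in \eqref{b1} gives $|\Phi(z,x_s)|\le C(kR_s)^{-1/2}$ and $|\Phi(x_r,z)|\le C(kR_r)^{-1/2}$. Lemma \ref{lem:4.1} yields $|u^s(x_r,x_s)|\le C(1+kd_D)^2(kR_s)^{-1/2}(kR_r)^{-1/2}$, and the quotient $\overline{u^i(x_r,x_s)}/u^i(x_r,x_s)$ has modulus $1$. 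Multiplying these bounds gives the pointwise estimate
\[
\left| \Phi(z,x_s)\Phi(x_r,z)u^s(x_r,x_s)\frac{\overline{u^i(x_r,x_s)}}{u^i(x_r,x_s)} \right| \le C(1+kd_D)^2 (kR_s)^{-1}(kR_r)^{-1}.
\]

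Next I would estimate $\meas(Q_\de)$. Parametrizing $x_s=R_s(\cos\theta_s,\sin\theta_s)$, $x_r=R_r(\cos\theta_r,\sin\theta_r)$, the surface measure is $ds(x_s)ds(x_r)=R_sR_r\,d\theta_s\,d\theta_r$. For each fixed $\theta_s\in(0,2\pi)$, the set of $\theta_r$ with $|\theta_r-\theta_s\pm m\pi|<\de$ for some $m\in\{0,1,2\}$ has one-dimensional Lebesgue measure at most $6\de$. Hence $\meas(Q_\de)\le 12\pi R_sR_r\de$.

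Putting the two ingredients together,
\[
\left| k^2\int\hskip-5pt\int_{Q_\de} \Phi(z,x_s)\Phi(x_r,z) u^s(x_r,x_s) \frac{\overline{u^i(x_r,x_s)}}{u^i(x_r,x_s)} ds(x_r)ds(x_s) \right| \le C(1+kd_D)^2\de,
\]
and recalling $\de=(kR_s)^{-1/2}$ yields the stated bound. The estimate is essentially a volume bound, so there is no real obstacle; the only point requiring any care is the pointwise bound on $\Phi(z,x_s),\Phi(x_r,z)$, which relies on $\Om$ being well inside both $B_s$ and $B_r$ so that the Hankel functions can be estimated by their large-argument asymptotics, and the observation that the phase factor $\overline{u^i}/u^i$ contributes no size. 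The reason $Q_\de$ is isolated for separate treatment here (rather than attempting stationary phase, as will presumably be done on the complement) is that near $\theta_r\equiv\theta_s\pmod\pi$ the phase of $\overline{u^i}/u^i$ degenerates, so oscillation is not available and only the crude bound remains.
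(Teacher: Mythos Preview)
Your proposal is correct and matches the paper's own argument essentially line for line: the paper's proof simply invokes Lemma \ref{lem:4.1}, the Hankel bound \eqref{b1}, and the measure estimate $|Q_\de|\le CR_rR_s(kR_s)^{-1/2}$, which is precisely what you spell out. Your additional remarks about why $Q_\de$ is treated separately are accurate and anticipate Lemma \ref{lem:4.8}.
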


\debproof The proof follows easily from Lemma \ref{lem:4.1} and \eqref{b1} and the fact that $|Q_\de|\le CR_rR_s(kR_s)^{-1/2}$.
\finproof

To estimate the integral in $\Ga_r\times\Ga_s\backslash \bar Q_\de$, we recall first the following useful mixed reciprocity relation \cite{k97}, \cite[P.40]{p01}.

\begin{lem}\label{lem:4.5}
Let $\ga_m=e^{\i\frac\pi 4}/\sqrt{8\pi k}$. Then $u^\infty_\ps(d,x_s)=\ga_m u^s(x_s,-d)$ for any $x_s\in\R^2\backslash\bar D,d\in S^1$, where $u^\infty_\ps(d,x_s)$ is the far field pattern in the direction $d$ of the scattering solution of \eqref{p1}-\eqref{p3} with $u^i(x)=\Phi(x,x_s)$ and $u^s(x,d)$ is the scattering solution of \eqref{p1}-\eqref{p3} with the incident plane wave $u^i(x)=e^{\i kx\cdot d}$.
\end{lem}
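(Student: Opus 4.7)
The plan is to start from the far-field representation \eqref{far} applied to $u^s_\ps(\cdot,x_s)$, recognize the factor $e^{-\i k d\cdot y}$ as the plane-wave incidence from direction $-d$, and reduce the resulting boundary integral to the Green's representation of the radiating solution $u^s(\cdot,-d)$ at $x_s$. The two ingredients I need are the sound-soft boundary conditions $u^s_\ps(y,x_s)=-\Phi(y,x_s)$ and $u^s(y,-d)=-e^{-\i k d\cdot y}$ on $\Ga_D$, together with two separate applications of Green's second identity.

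The first application is to the pair of radiating fields $u^s_\ps(\cdot,x_s)$ and $u^s(\cdot,-d)$ on $B_R\bks\bar D$. Letting $R\to\infty$, the Sommerfeld condition kills the contribution from $\pa B_R$ and leaves
\[
\int_{\Ga_D}\left[u^s_\ps(y,x_s)\frac{\pa u^s(y,-d)}{\pa\nu}-u^s(y,-d)\frac{\pa u^s_\ps(y,x_s)}{\pa\nu}\right]ds(y)=0.
\]
Inserting $u^s_\ps=-\Phi$ into this identity and using $e^{-\i k d\cdot y}=-u^s(y,-d)$ on $\Ga_D$ in the far-field formula, I would collapse $u^\infty_\ps(d,x_s)$ into $-\ga_m\int_{\Ga_D}\Phi(y,x_s)\,\pa u(y,-d)/\pa\nu\,ds(y)$, where $u(\cdot,-d)=e^{-\i k d\cdot y}+u^s(\cdot,-d)$ is the total field. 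Since $u(\cdot,-d)=0$ on $\Ga_D$, I can freely append $-u(y,-d)\,\pa\Phi(y,x_s)/\pa\nu$ inside the integrand.

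The second use of Green's identity arises after splitting the total field in that enlarged integrand. The plane-wave component paired with $\Phi(\cdot,x_s)$ involves two Helmholtz solutions of the interior domain $D$ (using $x_s\notin\bar D$), so Green's identity inside $D$ makes that piece vanish. The scattered-field component is precisely the Green's representation of the radiating solution $u^s(\cdot,-d)$ at the exterior point $x_s$, contributing $-u^s(x_s,-d)$. Together with the symmetry $\Phi(y,x_s)=\Phi(x_s,y)$ this yields the claimed identity $u^\infty_\ps(d,x_s)=\ga_m u^s(x_s,-d)$.

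The main subtlety is bookkeeping: keeping the orientation of $\nu$ (the outer normal of $D$) consistent between the interior and exterior Green's identities, and distinguishing boundary values from normal derivatives, since the sound-soft condition controls only the former. For general Lipschitz $\Ga_D$ every boundary integral is interpreted through the $H^{1/2}(\Ga_D)$--$H^{-1/2}(\Ga_D)$ duality already set up in Lemma \ref{lem:wp}.
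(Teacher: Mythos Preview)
The paper does not give its own proof of this lemma; it merely records the mixed reciprocity relation and cites Kress \cite{k97} and Potthast \cite[p.~40]{p01}. Your proposal supplies exactly the standard self-contained argument found in those references, specialized to the two-dimensional sound-soft setting: start from the far-field representation \eqref{far}, use that the bilinear Green expression of two radiating solutions over $\Ga_D$ vanishes, insert the Dirichlet data $u^s_\ps=-\Phi$ and $u^s(\cdot,-d)=-e^{-\i k d\cdot y}$ to collapse the expression to $-\ga_m\int_{\Ga_D}\Phi\,\pa u(\cdot,-d)/\pa\nu\,ds$, append the vanishing term $u(\cdot,-d)\,\pa\Phi/\pa\nu$, kill the plane-wave piece by Green's identity inside $D$ (legitimate since $x_s\notin\bar D$), and recognize the remainder as the exterior Green representation of $u^s(\cdot,-d)$ at $x_s$. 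The sign and normal-orientation bookkeeping you flag is handled correctly, and the $H^{1/2}$--$H^{-1/2}$ duality interpretation for Lipschitz $\Ga_D$ is appropriate. In short, your argument is correct and is precisely the proof the paper defers to the literature.
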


\begin{lem}\label{lem:4.7} Let $u^\infty_\pl(\hat x_s,-\hat x_r)$ be the far field pattern in the direction $\hat x_s$ of the scattering solution
of the Helmholtz equation with the incident plane wave $e^{-\i k\hat x_r\cdot x}$. Then
$|u^\infty_\pl(\hat x_s,-\hat x_r)|+(kd_D)^{-1}|\pa u^\infty_\pl(\hat x_s,-\hat x_r)/\pa\theta_s|\le Ck^{-1/2}(1+kd_D)^2$.
\end{lem}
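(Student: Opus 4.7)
The plan is to bound both quantities via the far field representation \eqref{far}, exploiting that with incident plane wave $e^{-\i k\hat x_r\cdot x}$ the scattering solution satisfies $w=-e^{-\i k\hat x_r\cdot y}$ on $\Ga_D$ and is independent of $\theta_s$.

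\textbf{Step 1: bound for $|u^\infty_\pl(\hat x_s,-\hat x_r)|$.} I would insert $w$ into the far field formula
\begin{equation*}
u^\infty_\pl(\hat x_s,-\hat x_r)=\frac{e^{\i\pi/4}}{\sqrt{8\pi k}}\int_{\Ga_D}\Big[w(y)\frac{\pa e^{-\i k\hat x_s\cdot y}}{\pa\nu(y)}-\frac{\pa w(y)}{\pa\nu(y)}e^{-\i k\hat x_s\cdot y}\Big]ds(y)
\end{equation*}
and use the $H^{1/2}$--$H^{-1/2}$ duality pairing on $\Ga_D$. Applying \eqref{d00} to the test function $\phi(y)=e^{-\i k\hat x_s\cdot y}$ (where $|\phi|=1$ and $|\na\phi|\le k$) yields $\|\phi\|_{H^{1/2}(\Ga_D)}+\|\pa\phi/\pa\nu\|_{H^{-1/2}(\Ga_D)}\le C(1+kd_D)$. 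The same estimate applied to $w|_{\Ga_D}=-e^{-\i k\hat x_r\cdot y}$ gives $\|w\|_{H^{1/2}(\Ga_D)}\le C(1+kd_D)$, and Lemma \ref{lem:wp} then provides $\|\pa w/\pa\nu\|_{H^{-1/2}(\Ga_D)}\le C(1+kd_D)$. Collecting gives $|u^\infty_\pl|\le Ck^{-1/2}(1+kd_D)^2$.

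\textbf{Step 2: bound for $\pa u^\infty_\pl/\pa\theta_s$.} Since $w$ depends on $\hat x_r$ but not on $\hat x_s$, I can differentiate under the integral to get
\begin{equation*}
\frac{\pa u^\infty_\pl}{\pa\theta_s}=\frac{e^{\i\pi/4}}{\sqrt{8\pi k}}\int_{\Ga_D}\Big[w(y)\frac{\pa}{\pa\nu(y)}\psi(y)-\frac{\pa w(y)}{\pa\nu(y)}\psi(y)\Big]ds(y),
\end{equation*}
where $\psi(y):=\pa e^{-\i k\hat x_s\cdot y}/\pa\theta_s=-\i k(-\sin\theta_s,\cos\theta_s)\cdot y\,e^{-\i k\hat x_s\cdot y}$. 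Without loss of generality taking the origin inside $\bar D$ (so $|y|\le d_D$), I have $|\psi(y)|\le kd_D$ and $|\na\psi(y)|\le Ck(1+kd_D)$ on $\bar D$. Then \eqref{d00} gives
\begin{equation*}
\|\psi\|_{H^{1/2}(\Ga_D)}+\|\pa\psi/\pa\nu\|_{H^{-1/2}(\Ga_D)}\le Ckd_D(1+kd_D).
\end{equation*}
Combining this with the bounds on $w$ and $\pa w/\pa\nu$ from Step 1, the $H^{1/2}$--$H^{-1/2}$ pairing produces $|\pa u^\infty_\pl/\pa\theta_s|\le Ck^{-1/2}\cdot kd_D(1+kd_D)^2$. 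Dividing by $kd_D$ and adding to Step 1 concludes the proof.

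The main obstacle is largely bookkeeping: one must verify that the $\theta_s$-derivative can legitimately be pulled inside the boundary integral (standard since $w$ is $\theta_s$-independent and the kernel is smooth in $\theta_s$), and then track the extra factor of $kd_D$ that appears once and only once when $\psi$ replaces $e^{-\i k\hat x_s\cdot y}$, which is exactly what the factor $(kd_D)^{-1}$ on the left side is absorbing. No reciprocity or far-field identity beyond the basic representation \eqref{far} is needed.
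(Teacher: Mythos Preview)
Your proof is correct and follows exactly the approach the paper sketches: the far field representation \eqref{far} with boundary data $g(y)=-e^{-\i k\hat x_r\cdot y}$, the stability estimate of Lemma~\ref{lem:wp}, and the trace-type bound \eqref{d00}. The paper in fact omits the details entirely, so your write-up simply fills in what the authors left implicit, including the observation that only the kernel $e^{-\i k\hat x_s\cdot y}$ carries the $\theta_s$-dependence and that differentiating it introduces the single factor $kd_D$ absorbed on the left-hand side.
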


\debproof The proof follows from the definition of the far field pattern in \eqref{far} with $g(y)=-e^{-\i k\hat x_r\cdot y}$ on $\Ga_D$, Lemma \ref{lem:wp}, and \eqref{d00}. Here we omit the details.
\finproof

The following lemma is essentially proved in \cite[Theorem 2.5]{colton-kress}.

\begin{lem}\label{lem:far} For any $x\in\R^2\backslash\bar D$, the solution of the scattering problem \eqref{ha}-\eqref{ha1} satisfies the asymptotic behavior:
\ben
w(x)=\frac{e^{\i k|x|}}{\sqrt{|x|}}w^\infty(\hat x)+\ga(x),
\een
where $|\ga(x)|\le C(1+kd_D)^3(k|x|)^{-3/2}\|g\|_{H^{1/2}(\Ga_D)}$.
\end{lem}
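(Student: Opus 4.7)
The plan is to subtract from the Green's integral representation of $w(x)$ the analogous representation of the leading term $e^{\i k|x|}|x|^{-1/2}w^\infty(\hat x)$, reducing the estimate to a pointwise bound on a remainder kernel on $\Ga_D$. Writing
\begin{eqnarray*}
w(x)=\int_{\Ga_D}\left[w(y)\frac{\pa\Phi(x,y)}{\pa\nu(y)}-\frac{\pa w(y)}{\pa\nu(y)}\Phi(x,y)\right]ds(y),
\end{eqnarray*}
and recognizing \eqref{far} as
\begin{eqnarray*}
\frac{e^{\i k|x|}}{\sqrt{|x|}}w^\infty(\hat x)=\int_{\Ga_D}\left[w(y)\frac{\pa T(x,y)}{\pa\nu(y)}-\frac{\pa w(y)}{\pa\nu(y)}T(x,y)\right]ds(y)
\end{eqnarray*}
with $T(x,y):=\ga_m e^{\i k|x|}|x|^{-1/2}e^{-\i k\hat x\cdot y}$ and $\ga_m$ as in Lemma~\ref{lem:4.5}, I would obtain the identity
\begin{eqnarray*}
\ga(x)=\int_{\Ga_D}\left[w(y)\frac{\pa R(x,y)}{\pa\nu(y)}-\frac{\pa w(y)}{\pa\nu(y)}R(x,y)\right]ds(y),\quad R(x,y):=\Phi(x,y)-T(x,y),
\end{eqnarray*}
so that the task reduces to bounding $R(x,\cdot)$ and its normal derivative on $\Ga_D$.

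The heart of the proof is a quantitative refinement of the classical large-argument asymptotic $\Phi(x,y)\sim T(x,y)$. Restricting first to $|x|\ge 2d_D$ so that $|y|/|x|\le 1/2$ for every $y\in\bar D$, Taylor expansion gives $|x-y|=|x|-\hat x\cdot y+O(|y|^2/|x|)$ and $|x-y|^{-1/2}=|x|^{-1/2}(1+O(|y|/|x|))$. The classical asymptotic $H_0^{(1)}(t)=\sqrt{2/(\pi t)}e^{\i(t-\pi/4)}(1+O(1/t))$, together with the identity $(H_0^{(1)})'=-H_1^{(1)}$ and the analogous expansion of $H_1^{(1)}$, then produces the pointwise bounds
\begin{eqnarray*}
|R(x,y)|\le C(1+kd_D)^2(k|x|)^{-3/2},\qquad |\nabla_y R(x,y)|\le Ck(1+kd_D)^2(k|x|)^{-3/2}
\end{eqnarray*}
uniformly in $y\in\bar D$. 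The quadratic phase correction $e^{\i k\cdot O(|y|^2/|x|)}=1+O(kd_D^2/|x|)$, multiplied by the leading amplitude of size $(k|x|)^{-1/2}$, supplies the dominant $(kd_D)^2(k|x|)^{-3/2}$ contribution, while differentiating in $y$ brings an additional prefactor of $k$ with an otherwise identical structure.

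Plugging these into \eqref{d00} applied to $\phi=R(x,\cdot)$ yields $\|R(x,\cdot)\|_{H^{1/2}(\Ga_D)}+\|\pa R(x,\cdot)/\pa\nu\|_{H^{-1/2}(\Ga_D)}\le C(1+kd_D)^3(k|x|)^{-3/2}$, the final factor of $(1+kd_D)$ arising as $d_D$ times the extra $k$ in the gradient bound. The $H^{1/2}$-$H^{-1/2}$ duality pairing in the expression for $\ga(x)$, combined with $w|_{\Ga_D}=g$ and Lemma~\ref{lem:wp}, then gives $|\ga(x)|\le C(1+kd_D)^3(k|x|)^{-3/2}\|g\|_{H^{1/2}(\Ga_D)}$ in this regime. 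The complementary case $|x|<2d_D$ is handled by direct crude estimates: there $(k|x|)^{-3/2}$ is bounded below by a constant multiple of $(kd_D)^{-3/2}$, and each of $|w(x)|$ (via Green's representation together with \eqref{b1}) and $|e^{\i k|x|}|x|^{-1/2}w^\infty(\hat x)|$ (via \eqref{far} and \eqref{d00}) is easily dominated by the right-hand side.

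The main obstacle is the careful bookkeeping of the $(1+kd_D)$ powers in the pointwise bounds on $R$ and $\nabla_y R$: tracking how the $(kd_D)^2$ from the quadratic phase correction, the $kd_D$ from the amplitude correction, and the extra factor $k$ from differentiation in $y$ combine, via \eqref{d00}, into exactly $(1+kd_D)^3$. Once these two pointwise inequalities are secured, the duality and trace arguments are entirely routine.
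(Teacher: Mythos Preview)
Your argument is essentially identical to the paper's: both subtract the far-field kernel $T(x,y)=\ga_m\,e^{\i k|x|}|x|^{-1/2}e^{-\i k\hat x\cdot y}$ from $\Phi(x,y)$ inside the integral representation, bound the remainder $R=\Phi-T$ and $\pa_\nu R$ pointwise using the Hankel asymptotics (the paper's \eqref{r2}) together with the quadratic phase estimate $|k|x-y|-k(|x|-\hat x\cdot y)|\le C(k|y|)^2(k|x|)^{-1}$, and then close via \eqref{d00} and Lemma~\ref{lem:wp}. The only divergence is your separate treatment of the regime $|x|<2d_D$: the paper does not split this off, and your crude bound there via Green's representation and \eqref{b1} does not actually control $|w(x)|$ uniformly as $x\to\Ga_D$ (the kernels $\Phi(x,\cdot)$, $\pa_\nu\Phi(x,\cdot)$ blow up); however, this near-field regime is never invoked in the paper's applications of the lemma, and the paper's own phase estimate tacitly presumes $|x|$ at least comparable to $d_D$.
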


\begin{proof} First we have the following integral representation formula
\bee\label{r1}
w(x)=\int_{\Ga_D}\left[w(y)\frac{\pa \Phi(x,y)}{\pa\nu(y)}-
\Phi(x,y)\frac{\pa w(y)}{\pa\nu(y)}\right]ds(y),\ \ \ \ \forall x\in\R^2\backslash\bar D.
\eee
By the asymptotic formulae of Hankel functions \cite[P.197]{waston}, for $n=1,2$,
\bee\label{r2}
H_n^{(1)}(t)=\left(\frac 2{\pi t}\right)^{1/2}e^{\i (kt-\frac {n\pi}2-\frac\pi 4)}+R_n(t),\ \ |R_n(t)|\le Ct^{-3/2},\ \ \forall t>0,
\eee
and the simple estimate $|k|x-y|-k(|x|-\hat x\cdot y)|\le C(k|y|)^2(k|x|)^{-1}$ for any $y\in \Ga_D$, $x\in\R^2\backslash\bar D$, we have
\be
\Phi(x,y)&=&\frac{e^{\i\frac \pi 4}}{\sqrt{8\pi k}}\frac{e^{\i k|x|}}{\sqrt{|x|}}e^{-\i k\hat x\cdot y}+\ga_0(x,y),\label{r3}\\
\frac{\pa\Phi(x,y)}{\pa\nu(y)}&=&\frac{e^{\i\frac \pi 4}}{\sqrt{8\pi k}}\frac{e^{\i k|x|}}{\sqrt{|x|}}\frac{\pa e^{-\i k\hat x\cdot y}}{\pa\nu(y)}+\ga_1(x,y),\label{r4}
\ee
where $|\ga_0(x,y)|+k^{-1}|\ga_1(x,y)|\le C(1+k|y|)^2(k|x|)^{-3/2}$ for some constant $C$ independent of $k$ and $D$. The proof completes by
inserting \eqref{r3}-\eqref{r4} into \eqref{r1} and using Lemma \ref{lem:wp} and \eqref{d00}. Here we omit the details.
\end{proof}

We also need the following slight generalization of Van der Corput lemma for the oscillatory integral \cite[P.152]{grafakos}.

\begin{lem}\label{lem:4.6}
For any $-\infty<a<b<\infty$, for every real-valued $C^2$ function $u$ that satisfies $|u'(t)|\ge 1$ for $t\in (a,b)$. Assume that $a=x_0<x_1<\cdots<x_N=b$ is a division of $(a,b)$ such that $u'$ is monotone in each
interval $(x_{i-1},x_i)$, $i=1,\cdots, N$. Then for any function $\phi$ defined on $(a,b)$ with integrable derivative, and for any $\lambda>0$,
\ben
\left|\int^b_a e^{\i\lambda u(t)}\phi(t)dt\right|\le
(2N+2)\lambda^{-1}\left[|\phi(b)|+\int^b_a|\phi'(t)|dt\right].
\een
\end{lem}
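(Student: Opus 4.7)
The plan is to generalize the classical $k=1$ Van der Corput estimate, which handles a single monotonicity piece, by applying it on each subinterval $(x_{i-1},x_i)$ and combining with integration by parts against $\phi$. The key auxiliary quantity is the antiderivative $G_i(t):=\int_{x_{i-1}}^t e^{\i\lambda u(s)}\,ds$, for which I will establish the one-piece bound $|G_i(t)|\le 2/\lambda$ uniformly on $[x_{i-1},x_i]$.

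To obtain this bound I would rewrite $e^{\i\lambda u(s)}=(\i\lambda u'(s))^{-1}(d/ds)e^{\i\lambda u(s)}$ and integrate by parts, producing boundary terms controlled by $1/|\lambda u'|$ and a remainder that is the integral of $e^{\i\lambda u}$ against $d(1/u')$. Because $u'$ is continuous with $|u'|\ge 1$, it has constant sign on the piece, so $1/u'$ is monotone of the same sign and bounded in absolute value by $1$; the total variation of $1/u'$ on $[x_{i-1},t]$ equals $|1/u'(x_{i-1})-1/u'(t)|$, and the elementary identity $|\alpha|+|\beta|+|\alpha-\beta|=2\max(|\alpha|,|\beta|)$ for same-sign reals $\alpha,\beta$ collapses the sum of boundary terms and the total-variation remainder into exactly $2/\lambda$. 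I expect this sharp-constant step to be the main obstacle, since the looser constants $3$ or $4$ produced by more naive estimates would destroy the advertised prefactor $2N+2$.

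With $|G_i|\le 2/\lambda$ in hand, a second integration by parts on each piece against $\phi$ gives $\int_{x_{i-1}}^{x_i}e^{\i\lambda u}\phi\,dt=G_i(x_i)\phi(x_i)-\int_{x_{i-1}}^{x_i}G_i(t)\phi'(t)\,dt$ (using $G_i(x_{i-1})=0$), so the modulus of the $i$-th piece is at most $(2/\lambda)\bigl[|\phi(x_i)|+\int_{x_{i-1}}^{x_i}|\phi'|\,dt\bigr]$. Summing over $i=1,\dots,N$, bounding each $|\phi(x_i)|\le|\phi(b)|+\int_a^b|\phi'|\,dt$ by the fundamental theorem of calculus, and using that the $\phi'$ contributions telescope to $\int_a^b|\phi'|\,dt$, I arrive at $(2/\lambda)\bigl[N\bigl(|\phi(b)|+\int_a^b|\phi'|\,dt\bigr)+\int_a^b|\phi'|\,dt\bigr]$, which is majorized by $(2N+2)\lambda^{-1}\bigl[|\phi(b)|+\int_a^b|\phi'(t)|\,dt\bigr]$, exactly the claimed bound.
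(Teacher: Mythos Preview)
Your argument is correct and rests on the same two ingredients as the paper's proof (integration by parts against $1/(\i\lambda u')$, with monotonicity of $u'$ controlling the total variation of $1/u'$), but the organization differs. The paper first proves the bound for $\phi\equiv 1$ by a single global integration by parts on $[a,b]$: the two boundary terms contribute $2\lambda^{-1}$, and the remainder $\int_a^b e^{\i\lambda u}\,\frac{d}{dt}\bigl(\tfrac{1}{\i\lambda u'}\bigr)\,dt$ is split into the $N$ monotonicity pieces, each contributing at most $2\lambda^{-1}$, giving $(2N+2)\lambda^{-1}$. Since the same argument applies on $[a,t]$ for any $t$, the global antiderivative $F(t)=\int_a^t e^{\i\lambda u(s)}\,ds$ satisfies $|F(t)|\le(2N+2)\lambda^{-1}$ uniformly, and one further global integration by parts, $\int_a^b\phi\,e^{\i\lambda u}\,dt=\phi(b)F(b)-\int_a^b F\phi'\,dt$, immediately yields the claim.

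Your route instead secures the sharp single-piece constant $|G_i(t)|\le 2/\lambda$ (via the nice identity $|\alpha|+|\beta|+|\alpha-\beta|=2\max(|\alpha|,|\beta|)$ for same-sign reals), integrates by parts against $\phi$ piece by piece, and then re-glues by bounding each interior value $|\phi(x_i)|\le|\phi(b)|+\int_a^b|\phi'|$. That extra gluing step introduces $N$ copies of $\int_a^b|\phi'|$, which is precisely offset by your tighter per-piece constant, so both routes land on the same $(2N+2)$ prefactor. The paper's global organization is slightly shorter and avoids the need to track the intermediate $\phi(x_i)$; your piecewise version makes the role of the sharp constant $2$ more explicit.
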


\debproof  By integration by parts we have
\ben
\int^b_ae^{\i\lam u(t)}dt=\left[\frac{e^{\i\lam u(t)}}{\i\lam u'(t)}\right]^b_a-\int^b_ae^{\i\lam u(t)}\frac d{dt}\left(\frac 1{\i\lam u'(t)}\right)dt.
\een
Since $u'$ is monotone in each interval $(x_{i-1},x_i)$, $i=1,\cdots, N$, and $|u'(t)|\ge 1$ in $(a,b)$, we have
\ben
\left|\int^b_ae^{\i\lam u(t)}\frac d{dt}\left(\frac 1{\i\lam u'(t)}\right)dt\right|\le\sum^N_{i=1}\lam^{-1}\left|\int^{x_i}_{x_{i-1}}\frac d{dt}
\left(\frac 1{u'(t)}\right)dt\right|\le 2N\lam^{-1},
\een
which implies $|\int^b_ae^{\i\lam u(t)}dt|\le (2N+2)\lam^{-1}$. For the general case, we denote $F(t)=\int_a^te^{\i\lam u(s)}ds$ and use integration by parts to obtain
\ben
\int^b_a\phi(t)e^{\i\lam u(t)}dt=\phi(b)F(b)-\int^b_aF(t)\phi'(t)dt.
\een
This completes the proof by using $|F(t)|\le (2N+2)\lam^{-1}$. \finproof

\begin{lem}\label{lem:4.8} We have
\ben
& &\left| k^2\,\int\hskip-5pt\int_{\Ga_r\times\Ga_s\backslash\bar Q_\de} \Phi(z,x_s)\Phi(x_r,z) u^s(x_r,x_s) \frac{\overline{u^i(x_r,x_s)}}{u^i(x_r,x_s)} ds(x_r)ds(x_s)\right|\\
& &\qquad\le C(1+kd_D)^3(kR_s)^{-1/2}+C(1+k|z|)^2(kR_s)^{-1}.
\een
\end{lem}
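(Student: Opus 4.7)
The plan is to expose the oscillatory structure of the integrand using the far-field asymptotics of Lemma \ref{lem:far} together with the mixed reciprocity identity of Lemma \ref{lem:4.5}, then apply the van der Corput type estimate of Lemma \ref{lem:4.6} to the resulting oscillatory integral.

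First I expand $u^s(x_r,x_s)$. Applying Lemma \ref{lem:far} to the scattering solution $u^s(\cdot,x_s)$ with Dirichlet data $-\Phi(\cdot,x_s)$ gives $u^s(x_r,x_s)=R_r^{-1/2}e^{\i kR_r}u^\infty_\ps(\hat x_r,x_s)+\ga_1(x_r,x_s)$. By Lemma \ref{lem:4.5} this equals $\ga_m R_r^{-1/2}e^{\i kR_r}u^s(x_s,-\hat x_r)+\ga_1$, and a second application of Lemma \ref{lem:far} to the plane-wave scattering solution $u^s(\cdot,-\hat x_r)$ yields $u^s(x_s,-\hat x_r)=R_s^{-1/2}e^{\i kR_s}u^\infty_\pl(\hat x_s,-\hat x_r)+\ga_2$. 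In parallel I use the Hankel asymptotic expansion \eqref{r2} to expand $\Phi(z,x_s)$, $\Phi(x_r,z)$ and the unit-modulus ratio $\overline{u^i(x_r,x_s)}/u^i(x_r,x_s)$, which is legitimate on $\Ga_r\times\Ga_s\setminus\bar Q_\de$ because there $k|x_r-x_s|\ge c\sqrt{kR_s}$. This decomposes the integrand into a leading part of the form $A(x_r,x_s)\,e^{\i k\Psi(x_r,x_s)}$ with phase
\[
\Psi(x_r,x_s)=|x_s-z|+|x_r-z|+R_s+R_r-2|x_r-x_s|,
\]
plus a finite sum of remainder terms. Each remainder admits a pointwise estimate from Lemma \ref{lem:4.1}, Lemma \ref{lem:wp}, \eqref{d00} and \eqref{b1}; substituting and integrating over $\Ga_r\times\Ga_s\setminus\bar Q_\de$ produces a contribution bounded by the $C(1+k|z|)^2(kR_s)^{-1}$ part of the asserted estimate.

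For the leading oscillatory integral I parametrize $x_s=R_s(\cos\theta_s,\sin\theta_s)$. A direct computation gives
\[
\pa_{\theta_s}\Psi=-\frac{R_s\,z\cdot\hat x_s^\perp}{|x_s-z|}-\frac{2R_sR_r\sin(\theta_s-\theta_r)}{|x_r-x_s|},
\]
and on $\Ga_r\times\Ga_s\setminus\bar Q_\de$ the separation condition gives $|\sin(\theta_s-\theta_r)|\ge c\de$ while $|x_r-x_s|\le R_r+R_s$, so $|\pa_{\theta_s}\Psi|\ge cR_s\de=c\sqrt{R_s/k}$ once $R_s$ is large compared to $|z|$. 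Moreover, $\theta_s\mapsto\pa_{\theta_s}\Psi$ is piecewise monotone on a bounded number of subintervals determined only by the geometry. Applying Lemma \ref{lem:4.6} to the $\theta_s$-integration with $\lam=c\sqrt{kR_s}$ (after normalizing the phase) supplies a gain of order $(kR_s)^{-1/2}$. The amplitude $AR_s$ factors as a product of smooth Hankel-asymptotic amplitudes and $\ga_m u^\infty_\pl(\hat x_s,-\hat x_r)$: differentiating the Hankel amplitudes in $\theta_s$ gives bounded multiples of themselves since $|x_s-z|\ge R_s-|z|\sim R_s$, and differentiating $u^\infty_\pl$ contributes the decisive extra factor $(1+kd_D)$ via Lemma \ref{lem:4.7}. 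Integrating the resulting one-dimensional estimate in $\theta_r$ and multiplying by $k^2$ yields the $C(1+kd_D)^3(kR_s)^{-1/2}$ term.

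The main technical obstacle is the careful bookkeeping of the total variation of the amplitude required by Lemma \ref{lem:4.6}: the product of several Hankel-asymptotic factors with the plane-wave far-field pattern must be differentiated in $\theta_s$, and one must ensure that the only growth in $d_D$ comes from $u^\infty_\pl$, so that Lemma \ref{lem:4.7} delivers exactly the factor $(1+kd_D)^3$ in the final estimate rather than a worse power.
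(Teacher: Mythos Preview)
Your proposal is correct and follows essentially the same strategy as the paper: expand $u^s$ via Lemma~\ref{lem:far} and the mixed reciprocity of Lemma~\ref{lem:4.5}, expand $\Phi$ and $\overline{u^i}/u^i$ via the Hankel asymptotics \eqref{r2}, bound the remainders directly, and apply the van der Corput estimate of Lemma~\ref{lem:4.6} to the $\theta_s$--integral of the leading oscillatory term. The one organizational difference is that the paper linearizes $\Phi(z,x_s)$, $\Phi(x_r,z)$ via \eqref{d5}--\eqref{d6} so that the oscillatory phase is simply $-2k|x_r-x_s|$ while the factor $e^{-\i k(\hat x_s+\hat x_r)\cdot z}$ sits in the amplitude $\phi$, whereas you keep $|x_s-z|+|x_r-z|$ in the phase; with your choice the lower bound $|\pa_{\theta_s}\Psi|\ge cR_s\de$ actually needs $kR_s\gg (k|z|)^2$ rather than merely $R_s\gg|z|$, and not every remainder is of order $(1+k|z|)^2(kR_s)^{-1}$---for instance the error from expanding $\overline{u^i}/u^i$ contributes a term of size $(1+kd_D)^2(kR_s)^{-1/2}$, which belongs to the first part of the asserted bound.
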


\debproof We first observe that for $(x_r,x_s)\in \Ga_r\times\Ga_s\backslash \bar Q_\de$, we have
\ben
k|x_r-x_s|\ge 2kR_s\sqrt\tau\left|\sin\frac{\theta_r-\theta_s}{2}\right|\ge 2kR_s\sqrt\tau\sin\frac\de 2\ge \frac 12(kR_s)^{1/2}\sqrt\tau,
\een
where we have used the fact that $\sin t\ge t/2$ for $t\in (0,\pi/2)$. Thus by \eqref{r2} we obtain
\bee\label{d3}
\frac{\overline{u^i(x_r,x_s)}}{u^i(x_r,x_s)}
=e^{-2\i k|x_r-x_s|+\i\frac\pi 2}+\rho_0(x_r,x_s),
\eee
where $|\rho_0(x_r,x_s)|\le C(kR_s)^{-1/2}$. Similar to \eqref{r3} we have
\be
\Phi(z,x_s)&=&\frac{e^{\i\frac\pi 4}}{\sqrt{8\pi k}}\frac{e^{\i kR_s}}{\sqrt{R_s}}e^{-\i k\hat x_s\cdot z}+\rho_1(z,x_s),\label{d5}\\
\Phi(z,x_r)&=&\frac{e^{\i\frac\pi 4}}{\sqrt{8\pi k}}\frac{e^{\i kR_r}}{\sqrt{R_r}}e^{-\i k\hat x_r\cdot z}+\rho_1(z,x_r),\label{d6}
\ee
where $|\rho_1(z,x_s)|\le C(1+k|z|)^2(kR_s)^{-3/2}$, $|\rho_1(z,x_r)|\le C(1+k|z|)^2(kR_r)^{-3/2}$.
Next by Lemma \ref{lem:far}, the mixed reciprocity in Lemma \ref{lem:4.5}, and \eqref{d00}
we have
\be\label{d4}
u^s(x_r,x_s)&=&\frac{e^{\i kR_r}}{\sqrt{R_r}}u^\infty_\ps(\hat x_r,x_s)+\rho_2(x_r,x_s)\nonumber\\
&=&\frac{e^{\i kR_r}}{\sqrt{R_r}}\ga_mu^s(x_s,-\hat x_r)+\rho_2(x_r,x_s)\nonumber\\
&=&\frac{e^{\i k(R_r+R_s)}}{\sqrt{R_rR_s}}\ga_mu^\infty_\pl(\hat x_s,-\hat x_r)+\rho_2(x_r,x_s)+\rho_3(x_r,x_s),
\ee
where $u^\infty_\pl(\hat x_s,-\hat x_r)$ is the far field pattern of the scattering solution of the Helmholtz equation with the incident plane wave $u^i=e^{-\i k\hat x_r\cdot x}$ and
\ben
|\rho_2(x_r,x_s)|&\le&C(1+kd_D)^3(kR_r)^{-3/2}\|\Phi(\cdot,x_s)\|_{H^{1/2}(\Ga_D)}\\
&\le&C(1+kd_D)^4(kR_r)^{-3/2}(kR_s)^{-1/2},\\
|\rho_3(x_r,x_s)|&\le&C(1+kd_D)^3(kR_r)^{-1/2}(kR_s)^{-3/2}\|e^{-\i k\hat x_r\cdot x}\|_{H^{1/2}(\Ga_D)}\\
&\le&C(1+kd_D)^4(kR_r)^{-1/2}(kR_s)^{-3/2}.
\een
Combining \eqref{d3}-\eqref{d4} we have
\be\label{d8}
&&k^2\int\hskip-5pt\int_{\Ga_r\times\Ga_s\backslash\bar Q_\de} \Phi(z,x_s)\Phi(x_r,z) u^s(x_r,x_s) \frac{\overline{u^i(x_r,x_s)}}{u^i(x_r,x_s)} ds(x_r)ds(x_s)\nonumber \\
&=&k^2R_sR_s\int\hskip-5pt\int_{(0,2\pi)^2\backslash\bar\Theta_\de}\Phi(z,x_s)\Phi(x_r,z) u^s(x_r,x_s) \frac{\overline{u^i(x_r,x_s)}}{u^i(x_r,x_s)} d\theta_rd\theta_s\nonumber\\
&=&-\frac{\ga_mk}{8\pi}e^{2\i k(R_r+R_s)}\int\hskip-5pt\int_{(0,2\pi)^2\backslash\bar\Theta_\de}\phi(\theta_r,\theta_s)e^{-2\i k|x_r-x_s|}d\theta_rd\theta_s+\rho(z),
\ee
where $\phi(\theta_r,\theta_s)=u^\infty_\pl(\hat x_s,-\hat x_r)e^{-\i k(x_s+x_r)\cdot z}$ and by Lemma \ref{lem:4.1}
\ben
|\rho(z)|&\le&C(1+kd_D)^2(kR_s)^{-1/2}+C(1+kd_D)^2(kR_s)^{-1/2}+C(1+k|z|)^2(kR_s)^{-1},\\
&\le&C(1+kd_D)^2(kR_s)^{-1/2}+C(1+k|z|)^2(kR_s)^{-1},
\een
where the second inequality follows from the fact that we are interested in the situation when $kR_s$ is sufficiently large such that $(1+kd_D)^2(kR_s)^{-1/2}\ll 1$. Now direct calculation shows that
\be\label{d7}
& &\int\hskip-5pt\int_{(0,2\pi)^2\backslash\bar\Theta_\de}\phi(\theta_r,\theta_s)e^{-2\i k|x_r-x_s|}d\theta_rd\theta_s\nonumber\\
&=&\int_{0}^{\de}\int_{(\theta_r+\de,\theta_r+\pi-\de)\cup (\theta_r+\pi+\de,\theta_r+2\pi-\de)}\phi(\theta_r,\theta_s)e^{-2\i k|x_r-x_s|}d\theta_sd\theta_r\nonumber\\
&+&\int_{\de}^{\pi-\de}\int_{(0,\theta_r-\de)\cup (\theta_r+\de,\theta_r+\pi-\de)\cup (\theta_r+\pi+\de,2\pi)}\phi(\theta_r,\theta_s)e^{-2\i k|x_r-x_s|}d\theta_sd\theta_r\nonumber\\
&+&\int_{\pi-\de}^{\pi+\de}\int_{(\theta_r-\pi+\de,\theta_r-\de)\cup (\theta_r+\de,\theta_r+\pi-\de)}\phi(\theta_r,\theta_s)e^{-2\i k|x_r-x_s|}d\theta_sd\theta_r\nonumber\\
&+&\int_{\pi+\de}^{2\pi-\de}\int_{(0,\theta_r-\pi-\de)\cup (\theta_r-\pi+\de,\theta_r-\de)\cup (\theta_r+\de,2\pi)}\phi(\theta_r,\theta_s)e^{-2\i k|x_r-x_s|}d\theta_sd\theta_r\nonumber\\
&+&\int_{2\pi-\de}^{2\pi}\int_{(\theta_r-2\pi+\de,\theta_r-\pi-\de)\cup(\theta_r-\pi+\de,\theta_r-\de)}\phi(\theta_r,\theta_s)e^{-2\i k|x_r-x_s|}d\theta_sd\theta_r\nonumber\\
&:=&{\rm I}_1+\cdots+{\rm I}_5.
\ee
By Lemma \ref{lem:4.7} and $\de=(kR_s)^{-1/2}$ we have $|{\rm I}_1+{\rm I}_3+{\rm I}_5|\le Ck^{-1/2}(1+kd_D)^2(kR_s)^{-1/2}$.

We will use Lemma \ref{lem:4.6} to estimate ${\rm I_2}$ and ${\rm I}_4$. For that purpose, denote by $v(\theta_s)=-\sqrt{1+\tau^2-2\tau\cos(\theta_r-\theta_s)}$. We have $v'(\theta_s)=\tau\sin(\theta_s-\theta_r)/v(\theta_s)$ and thus $|v'(\theta_s)|\ge\tau|\sin\de|/|v(\theta_s)|\ge\frac\tau{1+\tau}\frac\de 2\ge\de/4=\frac 14(kR_s)^{-1/2}$ for $(\theta_r,\theta_s)\in
\Ga_r\times\Ga_s\backslash\bar\Theta_\de$. Moreover, $v''(\theta_s)=-\tau^2(\cos(\theta_s-\theta_r)-\tau)(\cos(\theta_s-\theta_r)-\tau^{-1})/v(\theta_s)^3$ which implies $v'(\theta_s)$ is piecewise monotone in $(0,2\pi)$ for any fixed $\theta_r\in (0,2\pi)$ since $\tau\ge 1$.

Now since $-2\i k|x_r-x_s|=2\i kR_s v(\theta_s)$, we obtain
by Lemma \ref{lem:4.6} and Lemma \ref{lem:4.7} that for $\theta_r\in (\de,\pi-\de)$,
\ben
\left|\int_0^{\theta_r-\de}\phi(\theta_r,\theta_s)e^{-2\i k|x_r-x_s|}d\theta_s\right|\le Ck^{-1/2}(1+kd_D)^3(kR_s)^{-1/2}.
\een
The other integrals in ${\rm I}_2$ and ${\rm I}_4$ can be estimated similarly to obtain
\ben
|{\rm I}_2|+|{\rm I}_4|\le Ck^{-1/2}(1+kd_D)^3(kR_s)^{-1/2}.
\een
This completes the proof by \eqref{d8}.
\finproof

The following theorem is the main result of this section.

\begin{thm}\label{res3}
For any ~$z\in\Om$, let $\psi(x,z)$ be the scattering solution to the problem:
\bee\label{ps1}
\De\psi(x,z)+k^2\psi(x,z)=0\ \ \ \ \mbox{\rm in } \R^2  \bks \bar D,\ \ \ \
\psi(x,z)=-\Im \Phi(x,z) \ \ \mbox{\rm on } \Ga_D.
\eee
Then if the measured field $|u(x_r,x_s)|=|u^s(x_r,x_s)+u^i(x_r,x_s)|$ with $u^s(x,x_s)$ satisfying the problem \eqref{p1}-\eqref{p3} with the incident field $u^i(x,x_s)=\Phi(x,x_s)$, we have
\ben
\hat I(z)= k\int_{S^1}|\psi^\infty(\hat x,z)|^2d\hat x+w_{\hat I}(z),\ \ \ \ \forall z\in\Om,
\een
where $|w_{\hat I}(z)|\le C(1+kd_D)^4(kR_s)^{-1/2}+C(1+kd_D)^2(1+k|z|)^2(kR_s)^{-1}$.
\end{thm}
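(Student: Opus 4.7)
The plan is to exploit the decomposition already prepared in \eqref{c1}--\eqref{c3}, which writes $\hat I(z)$ as the sum of three integrals. The first one is precisely the full-phase RTM imaging functional studied in \cite{cch_a}, whose analysis there yields the main term $k\int_{S^1}|\psi^\infty(\hat x,z)|^2 d\hat x$ together with a remainder bounded by $C(1+kd_D)^2(1+k|z|)^2(kR_s)^{-1/2}$-type expressions; the sharper $(kR_s)^{-1}$ in the $(1+k|z|)^2$ part of the stated error comes from this step, via the asymptotic expansions \eqref{r3}--\eqref{r4} applied to $\Phi(z,x_s)$ and $\Phi(x_r,z)$ combined with the far-field identity \eqref{y1} for $\psi$.

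Once the first term is identified with the main contribution, the remaining task is to show that the second and third terms in \eqref{c3} are lower-order perturbations. The second term is handled in one shot by Lemma~\ref{lem:4.3}, which gives a bound of order $C(1+kd_D)^4(kR_s)^{-1/2}$ and thus contributes only to the error $w_{\hat I}(z)$. For the third term I would split the domain of integration as $\Ga_r\times\Ga_s=Q_\de\cup(\Ga_r\times\Ga_s\setminus\bar Q_\de)$ with $\de=(kR_s)^{-1/2}$: the contribution over $Q_\de$ is the content of Lemma~\ref{lem:4.4}, and the contribution over the complement is exactly Lemma~\ref{lem:4.8}. Adding these two pieces gives at worst $C(1+kd_D)^3(kR_s)^{-1/2}+C(1+k|z|)^2(kR_s)^{-1}$, still absorbed into the stated $w_{\hat I}(z)$.

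The final step is purely bookkeeping: take the imaginary part, combine all three estimates, and check that the union of the error bounds from the three pieces is dominated by
$$C(1+kd_D)^4(kR_s)^{-1/2}+C(1+kd_D)^2(1+k|z|)^2(kR_s)^{-1},$$
which is the claimed bound on $w_{\hat I}(z)$.

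The main obstacle is really not in this proof but in the lemmas feeding it; within this theorem itself the only nontrivial step is ensuring that the analysis in \cite{cch_a} for the first integral still produces the leading term $k\int_{S^1}|\psi^\infty(\hat x,z)|^2 d\hat x$ under the current normalization (notice the boundary data for $\psi$ on $\Ga_D$ is $-\Im\Phi(x,z)$, which corresponds, via the asymptotic \eqref{r3} applied to $\Phi(z,x_s)$ and the stationary-phase reduction of the $\Ga_s$ integral, to placing the incident far-field pattern in the ``imaginary part'' slot of the cross-correlation). Once that identification is recorded, the rest is the straightforward summation described above.
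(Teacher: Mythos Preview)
Your proposal is correct and follows essentially the same route as the paper: decompose via \eqref{c3}, dispatch the second term with Lemma~\ref{lem:4.3}, the third term with Lemmas~\ref{lem:4.4} and~\ref{lem:4.8}, and then identify the first term with the full-phase RTM functional from \cite{cch_a}. The only minor difference is that the paper does not entirely defer the first-term analysis to \cite{cch_a} but includes a self-contained sketch based on the Helmholtz--Kirchhoff identity (Lemma~3.2 there) rather than on the pointwise expansions \eqref{r3}--\eqref{r4}; this yields directly the remainder $|\zeta(z)|\le C(1+kd_D)^2(1+kd_D+k|z|)^2(kR_s)^{-1}$, which then splits into the two pieces of the stated bound on $w_{\hat I}(z)$.
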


\debproof By \eqref{c3}, Lemma \ref{lem:4.4} and Lemma \ref{lem:4.8} we are left to show that
\ben
-k^2\,\Im\int_{\Ga_s}\int_{\Ga_r} \Phi(z,x_s)\Phi(x_r,z)   \overline{ u^s(x_r,x_s) }  ds(x_r)ds(x_s) = k\int_{S^1}|\psi^\infty(\hat x,z)|^2d\hat x +\zeta(z),
\een
with $|\zeta(z)|\le C(1+kd_D)^2(1+kd_D+k|z|)^2(kR_s)^{-1}$. This can be done by a similar argument as that in \cite[Theorem 3.2]{cch_a}. For the
sake of completeness, we include a sketch of the proof here.

By the corollary of Helmholtz-Kirchhoff identity in \cite[Lemma 3.2]{cch_a}, for any $z,y\in\Om$,
\bee
k\int_{\Ga_r}\Phi(x_r,z)\overline{\Phi(x_r,y)}ds(x_r)=\Im\Phi(z,y)+w_r(z,y),\label{s1}
\eee
where $|w_r(z,y)|+k^{-1}|w_r(z,y)|\le C(1+k|y|+k|z|)^2(kR_r)^{-1}$. Now by the integral representation formula
\ben
u^s(x_r,x_s)=\int_{\Ga_D}\left(u^s(y,x_s)\frac{\pa \Phi(x_r,y)}{\pa\nu(y)}-\frac{\pa u^s(y,x_s)}{\pa\nu(y)}\Phi(x_r,y)\right)ds(y),
\een
we have
\ben
& &k\int_{\Ga_r} \Phi( x_r,z)\overline{u^s(x_r,x_s)}ds(x_r)\\
&=&\int_{\Ga_D}\Big[ \overline{u^s(y,x_s)}\frac{\pa \Im \Phi(z,y)}{\pa \nu(y)}-\frac{\pa \overline{u^s(y,x_s)}}{\pa\nu(y)}\Im \Phi(z,y)\Big]ds(y) + \zeta_1(z,x_s),
\een
where by \eqref{d00}
\ben
|\zeta_1(z,x_s)|&=&\left|\int_{\Ga_D}\Big[ \overline{u^s(y,x_s)}\frac{\pa w_r(z,y)}{\pa \nu(y)}-\frac{\pa \overline{u^s(y,x_s)}}{\pa\nu(y)}w_r(z,y)\Big]ds(y)\right|\\
&\le&C(1+kd_D)^2(1+kd_D+k|z|)^2(kR_s)^{-3/2}.
\een
By the definition of the imaging function $\hat I(z)$, we have then
\bee\label{cor4_ha}
\hat I(z)=-\Im\int_{\Ga_D}\Big[v_s(y,z) \frac{\pa \Im \Phi(z,y)}{\pa \nu(y)} -\frac{\pa v_s(y,z)}{\pa\nu(y)} \Im \Phi(z,y)\Big]ds(y)+ \zeta_2(z),
\eee
where $v_s(y,z)= k\int_{\Ga_s}\Phi(x_s,z)\overline{u^s(y,x_s)}ds(x_s)$ and
\be\label{s2}
|\zeta_2(z)|&=&k\left|\int_{\Ga_s}\Phi(x_s,z)\zeta_1(z,x_s)ds(x_s)\right|\nonumber\\
&\le&C(1+kd_D)^2(1+kd_D+k|z|)^2(kR_s)^{-1}.
\ee
Taking the complex conjugate we get
\ben
\overline{v_s(y,z)}= k\int_{\Ga_s}\overline{\Phi(x_s,z)}u^s(y,x_s)ds(x_s).
\een
Therefore, $\overline{v_s(y,z)}$ can be viewed as the weighted superposition of $u^s(y,x_s)$. Then $\overline{v_s(y,z)}$ satisfies the Helmholtz  equation
\ben
\De_y\overline{v_s(y,z)}+k^2\overline{v_s(y,z)}=0\ \ \ \ \mbox{in }\R^2\bks\bar D.
\een
On the boundary of the obstacle $\Ga_D$, we have
\ben
\overline{v_s(y,z)}
&=& k\int_{\Ga_s}\overline{\Phi(x_s,z)} u^s(y,x_s)ds(x_s)\\
&=&- k\int_{\Ga_s}\overline{\Phi(x_s,z)}\Phi(y,x_s)ds(x_s)\\
&=&- \Im \Phi(z,y) - w_s(z,y) ,\ \ \ \ \forall y\in\Ga_D,
\een
where, similar to \eqref{s1}, $|w_s(z,y)|+k^{-1}|w_s(z,y)|\le C(1+k|y|+k|z|)^2(kR_s)^{-1}$.
By the definition of $\psi$ we have
\ben
\hat{I}(z) &=&-\Im\int_{\Ga_D}\Big[\overline{\psi(y,z)} \frac{\pa \Im \Phi(z,y)}{\pa \nu(y)} -\frac{\overline{\pa\psi(y,z)}}{\pa\nu(y)} \Im \Phi(z,y)\Big]ds(y)+\zeta_3(z), \\
 &=& -\Im\int_{\Ga_D}\frac{\overline{\pa\psi(y,z)}}{\pa\nu(y)} \psi(y,z)ds(y)+\zeta_3(z).
\een
where we have used the boundary condition of $\psi$ on $\Ga_D$ in the second inequality and
\ben
\zeta_3(z)=\zeta_2(z)+\Im\int_{\Ga_D}\Big[\overline{\phi(y,z)} \frac{\pa \Im \Phi(z,y)}{\pa \nu(y)} -\frac{\overline{\pa\phi(y,z)}}{\pa\nu(y)} \Im \Phi(z,y)\Big]ds(y).
\een
Here $\phi(y,z)$ is the solution of scattering problem \eqref{ha}-\eqref{ha1} with the boundary condition $g=w_s(z,y)$. Now by \eqref{d00} and \eqref{s2} we then obtain
\ben
|\zeta_3(z)|\le C(1+kd_D)^2(1+kd_D+k|z|)^2(kR_s)^{-1}.
\een
Now the theorem is proved by using \eqref{y1}.
\finproof

We remark that $\psi(x,z)$ is the scattering solution of the Helmholtz equation  with the incoming field
$J_0(k|x-z|)$.
It is well-known that $J_0(t)$ peaks at $t=0$ and decays like $t^{-1/2}$ away from the origin. The source of the problem \eqref{ps1} will peak at the boundary of the scatterer $D$ and becomes small when $z$ moves away from $\pa D$. Thus we expect that the imaging function $\hat I(z)$ will have a large contrast at the boundary of the scatterer $D$ and decay outside the boundary $\pa D$. This is indeed observed in our numerical experiments.

\section{Extensions}

In this section we consider briefly the imaging of the penetrable and impedance non-penetrable obstacles with phaseless data. We first consider the imaging of impedance non-penetrable obstacles with the phaseless data, in which case, the measured phaseless total field $|u(x_r,x_s)|=|u^s(x_r,x_s)+u^i(x_r,x_s)|$, where $u^s(x,x_s)$ is the radiation solution of the following problem:
\be
& & \Delta u^s + k^2 u^s =0 \qquad \mbox{in } \R^2\bks\bar D, \label{h1}\\
& & \frac{\pa u^s}{\pa\nu}+\i k\eta(x)u^s=-\frac{\pa u^i}{\pa\nu}-\i k\eta(x)u^i \ \ \ \ \mbox{ on } \Ga_D. \label{h2}
\ee
Here $\eta(x)>0$ is the impedance function. The well-posedness of the problem  \eqref{h1}-\eqref{h2} is well-known \cite{ccm01, leis}. By modifying
the argument in section 3 and \cite[Theorem 3.2]{cch_a} we can show the following theorem whose proof is omitted.

\begin{thm}\label{res_dir}
 For any $z\in\Om$, let $\psi(x,z)$ be the radiation solution of the problem
\ben
& &\De\psi(x,z)+k^2\psi(x,z)=0\ \ \ \ \mbox{\rm in }\R^2\bks\bar D,\\
    & &    \frac{\pa \psi(x,z)}{\pa\nu}+\i k\eta(x)\psi(x,z)=- \frac{\pa \Im\Phi(x,z)}{\pa\nu}-\i k\eta(x)  \Im\Phi(x,z) \ \ \ \ \mbox{\rm on }\Ga_D.
\een
Then if the measured field $|u(x_r,x_s)|=|u^s(x_r,x_s)+u^i(x_r,x_s)|$ with $u^s(x,x_s)$ satisfying \eqref{h1}-\eqref{h2}, we have, for any $z\in\Om$,
\ben
\hat I(z)=k\int_{S^1}|\psi^\infty(\hat x,z)|^2d\hat x+k\int_{\Ga_D}\eta(x)\left|\psi(x,z)+\Im\Phi(x,z)\right|^2d\hat{x}+w_{\hat I}(z),
\een
where $|w_{\hat I}(z)|\le C(1+kd_D)^4(kR_s)^{-1/2}+C(1+kd_D)^2(1+k|z|)^2(kR_s)^{-1}$.
\end{thm}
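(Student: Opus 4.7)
The strategy is to mirror the proof of Theorem \ref{res3} step by step, adapting each estimate to the impedance condition \eqref{h2} in place of the Dirichlet condition on $\Ga_D$. First I would establish the impedance analogues of Lemmas \ref{lem:4.1}, \ref{lem:4.3} and \ref{lem:4.8}, which control the last two terms in the decomposition \eqref{c1} of $\De(x_r,x_s)$. The only ingredient that truly needs to be replaced is the stability estimate of Lemma \ref{lem:wp}: for the impedance problem \eqref{h1}-\eqref{h2} a well-posedness bound of the form $\|\pa u^s/\pa\nu\|_{H^{-1/2}(\Ga_D)}+\|u^s\|_{H^{1/2}(\Ga_D)}\le C\|\pa u^i/\pa\nu+\i k\eta u^i\|_{H^{-1/2}(\Ga_D)}$ is classical (cf.\ \cite{leis, ccm01}). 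Combined with \eqref{d00} this gives the impedance version of Lemma \ref{lem:4.1}, and the remaining Lemmas \ref{lem:4.3} and \ref{lem:4.8} carry over essentially verbatim, since their proofs use only this pointwise bound, the mixed reciprocity of Lemma \ref{lem:4.5} (which holds whenever the forward problem is well-posed), and the Hankel and Van der Corput estimates, all of which are independent of the obstacle type.

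It remains to treat the main term $-k^2\Im\int_{\Ga_s}\int_{\Ga_r}\Phi(z,x_s)\Phi(x_r,z)\overline{u^s(x_r,x_s)}\,ds(x_r)ds(x_s)$. Applying the Helmholtz-Kirchhoff identity \eqref{s1} on $\Ga_r$ and the Green representation of $u^s(\cdot,x_s)$ in $\R^2\bks\bar D$, the argument of Theorem \ref{res3} reduces this term to the boundary integral
\[
-\Im\int_{\Ga_D}\left[v_s(y,z)\frac{\pa\Im\Phi(z,y)}{\pa\nu(y)}-\frac{\pa v_s(y,z)}{\pa\nu(y)}\Im\Phi(z,y)\right]ds(y)+\zeta_2(z),
\]
where $v_s(y,z):=k\int_{\Ga_s}\Phi(x_s,z)\overline{u^s(y,x_s)}\,ds(x_s)$ and $\zeta_2$ is bounded as in \eqref{s2}. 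The new step specific to the impedance case is to identify $\overline{v_s(y,z)}$ as an approximate radiation solution of the exterior impedance problem with data driven by $\Im\Phi(z,\cdot)$. Since each $u^s(y,x_s)$ satisfies \eqref{h2} with forcing $-\pa_\nu\Phi(y,x_s)-\i k\eta\Phi(y,x_s)$, invoking the Helmholtz-Kirchhoff identity a second time on $\Ga_s$ (applied to both $\Phi$ and $\pa_\nu\Phi$) yields
\[
\frac{\pa\overline{v_s(y,z)}}{\pa\nu(y)}+\i k\eta(y)\overline{v_s(y,z)}=-\frac{\pa\Im\Phi(z,y)}{\pa\nu(y)}-\i k\eta(y)\Im\Phi(z,y)+w_s(z,y)\quad\mbox{on }\Ga_D,
\]
with $|w_s(z,y)|+k^{-1}|\na w_s(z,y)|\le C(1+k|y|+k|z|)^2(kR_s)^{-1}$. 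Impedance well-posedness then lets me write $\overline{v_s}=\psi+\phi$ in $\R^2\bks\bar D$, where $\phi$ is the radiation solution with Robin data $w_s$, whose contribution is absorbed into the final error $w_{\hat I}(z)$; equivalently $v_s\approx\overline\psi$ and $\pa_\nu v_s\approx\overline{\pa_\nu\psi}$ on $\Ga_D$.

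Substituting $\overline\psi$ and $\overline{\pa_\nu\psi}$ for $v_s$ and $\pa_\nu v_s$ in the boundary integral and using the impedance condition for $\psi$ to replace $\pa_\nu\Im\Phi=-\pa_\nu\psi-\i k\eta(\psi+\Im\Phi)$ (and, after conjugation, $\overline{\pa_\nu\psi}=-\pa_\nu\Im\Phi+\i k\eta(\overline\psi+\Im\Phi)$), a short algebraic manipulation produces the pointwise identity
\[
\overline\psi\,\frac{\pa\Im\Phi}{\pa\nu}-\overline{\pa_\nu\psi}\,\Im\Phi=-\overline\psi\,\pa_\nu\psi+\Im\Phi\,\pa_\nu\Im\Phi-\i k\eta\bigl[|\psi|^2+2\overline\psi\,\Im\Phi+(\Im\Phi)^2\bigr].
\]
Taking $-\Im$ and integrating over $\Ga_D$, the real product $\Im\Phi\,\pa_\nu\Im\Phi$ drops out; the identity \eqref{y1} (in its conjugated form) gives $\Im\int_{\Ga_D}\overline\psi\,\pa_\nu\psi\,ds=k\int_{S^1}|\psi^\infty(\hat x,z)|^2 d\hat x$; and since $\Im\Phi$ is real one has $|\psi|^2+2\Re(\psi)\,\Im\Phi+(\Im\Phi)^2=|\psi+\Im\Phi|^2$, so the bracket contributes exactly $k\int_{\Ga_D}\eta(x)\,|\psi(x,z)+\Im\Phi(x,z)|^2\,ds$, yielding the announced formula.

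The principal obstacle is the first step: one must verify that the impedance well-posedness constant exhibits the same polynomial growth in $kd_D$ as in Lemma \ref{lem:wp}, so that the error bounds inherited through Lemmas \ref{lem:4.3}, \ref{lem:4.8} and through the replacement $\overline{v_s}\leftrightarrow\psi$ all remain of the claimed size $C(1+kd_D)^4(kR_s)^{-1/2}+C(1+kd_D)^2(1+k|z|)^2(kR_s)^{-1}$. Once that scaling is in hand, the remaining manipulations are either direct repetitions of the arguments of section 3 or the elementary algebraic identity exploited in the third paragraph.
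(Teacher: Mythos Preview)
Your proposal is correct and is precisely the modification the paper has in mind; since the paper omits the proof entirely and only refers to adapting the arguments of section 3 and \cite[Theorem 3.2]{cch_a}, you have faithfully reconstructed that adaptation, including the key algebraic step where the impedance boundary condition for $\psi$ is used to produce the extra nonnegative term $k\int_{\Ga_D}\eta\,|\psi+\Im\Phi|^2\,ds$. The caveat you raise about the $kd_D$-dependence of the impedance stability constant is genuine but is left implicit in the paper as well.
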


For penetrable obstacles,  the measured total field $|u(x_r,x_s)|=|u^s(x_r,x_s)+u^i(x_r,x_s)|$, where $u^s(x,x_s)$ is the radiation solution of the following problem
\bee
\Delta u^s + k^2n(x)u^s = -k^2(n(x)-1)u^i(x,x_s) \ \ \mbox{in } \R^2 \label{hh1}
\eee
with $n(x)\in L^\infty(\R^2)$ being a positive function which is equal to $1$ outside the scatterer $D$.
The well-posedness of the problem under some condition on $n(x)$ is known \cite{zhang94}. By modifying the argument in section 3 and in \cite[Theorem 3.1]{cch_a}, the following theorem can be proved. Here we omit the details.

\begin{thm}\label{res_penetrable}
 For any $z\in\Om$, let $\psi(x,z)$ be the radiation solution of the problem
\be
& &\Delta\psi+k^2n(x)\psi=-k^2(n(x)-1)\Im\Phi(x,z)\ \ \ \ \mbox{\rm in }\R^2.\label{ps0}
\ee
Then if the measured field $|u(x_r,x_s)|=|u^s(x_r,x_s)+u^i(x_r,x_s)|$ with $u^s(x,x_s)$ satisfying \eqref{hh1}, we have
\ben
\hat I(z)=k\int_{S^1}|\psi^\infty(\hat x,z)|^2d\hat x+w_{\hat I}(z)\ \ \ \ \forall z\in\Om,
\een
where $|w_{\hat I}(z)|\le C(1+kd_D)^4(kR_s)^{-1/2}+C(1+kd_D)^2(1+k|z|)^2(kR_s)^{-1}$.
\end{thm}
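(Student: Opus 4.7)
The plan is to follow the proof of Theorem \ref{res3} line by line, with three structural replacements. First, the boundary integral representation $u^s(x_r,x_s)=\int_{\Ga_D}(\cdots)\,ds(y)$ used throughout Section 3 is replaced by the contrast-source volume representation
\begin{equation*}
u^s(x,x_s)=k^2\int_D(n(y)-1)\Phi(x,y)u(y,x_s)\,dy,\qquad u=u^i+u^s,
\end{equation*}
and analogously $\psi(x,z)=k^2\int_D(n(y)-1)\Phi(x,y)[\psi(y,z)+\Im\Phi(y,z)]\,dy$. Second, the Dirichlet trace bound \eqref{d00} and Lemma \ref{lem:wp} are replaced by an $L^2(D)$ well-posedness estimate for the Lippmann--Schwinger equation for \eqref{hh1} (as in \cite{zhang94}), with constants tracked in $k$ and $d_D$. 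Third, the algebraic decomposition \eqref{c3} of $\hat I(z)$ depends only on $u=u^i+u^s$ and so remains valid here.

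First I would establish the penetrable analog of Lemma \ref{lem:4.1}: from the volume representation together with the Hankel bound \eqref{b1} and the $L^2(D)$ well-posedness above, one obtains $|u^s(x_r,x_s)|\le C(1+kd_D)^p(kR_r)^{-1/2}(kR_s)^{-1/2}$ for an appropriate $p$. With this pointwise bound the proofs of Lemmas \ref{lem:4.3} and \ref{lem:4.4} go through verbatim, controlling the second term of \eqref{c3} and the $Q_\de$-part of the third term by $C(1+kd_D)^q(kR_s)^{-1/2}$.

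For the off-diagonal part of the third term (the analog of Lemma \ref{lem:4.8}) I would need two further ingredients: a far-field expansion $u^s(x_r,x_s)=R_r^{-1/2}e^{\i kR_r}u^\infty_\ps(\hat x_r,x_s)+\rho_2$, which follows by inserting \eqref{r2} into the volume representation, and an analog of Lemma \ref{lem:4.7} for $u^\infty_\pl(\hat x_s,-\hat x_r)$, obtained the same way. The mixed reciprocity of Lemma \ref{lem:4.5} is known to hold for penetrable media, and the oscillatory integral estimate driven by Lemma \ref{lem:4.6} is unchanged because the phase $-2\i k|x_r-x_s|$ is independent of the obstacle.

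The main (first) term is then treated by the Helmholtz--Kirchhoff argument of \cite[Theorem 3.1]{cch_a}: using \eqref{s1} one writes the $x_r$-integration against $\overline{u^s}$ as a volume integral over $D$ involving $\overline{v_s(y,z)}:=k\int_{\Ga_s}\overline{\Phi(x_s,z)}u^s(y,x_s)\,ds(x_s)$. Substituting the volume representation shows $\overline{v_s(\cdot,z)}$ satisfies the penetrable Helmholtz equation $\De\overline{v_s}+k^2n\overline{v_s}=-k^2(n-1)\Im\Phi(\cdot,z)$ up to a forcing of size $C(1+kd_D)^2(1+k|z|)^2(kR_s)^{-1}$; stability of \eqref{ps0} then yields $\overline{v_s}=-\psi$ plus a controlled error, and Green's identity applied to $\psi$ on a large ball (taking imaginary parts and using the Sommerfeld condition as in \eqref{y1}) identifies the leading contribution as $k\int_{S^1}|\psi^\infty(\hat x,z)|^2d\hat x$. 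The main obstacle throughout is the uniform Lippmann--Schwinger well-posedness with explicitly tracked $(1+kd_D)^p$-dependence: once that input is secured, each step above is a mechanical transcription of the Dirichlet argument of Section 3, yielding the stated error bound.
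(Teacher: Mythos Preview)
Your proposal is correct and is precisely the route the paper has in mind: the paper itself gives no proof, merely pointing to ``modifying the argument in section 3 and in \cite[Theorem 3.1]{cch_a}'', and your sketch carries out exactly that modification (volume contrast-source representation in place of the boundary representation, Lippmann--Schwinger stability in place of Lemma~\ref{lem:wp}, with the decomposition \eqref{c3}, the oscillatory-integral Lemmas~\ref{lem:4.3}--\ref{lem:4.8}, and the Helmholtz--Kirchhoff identification of the leading term all transcribed). One minor slip: from $\De\overline{v_s}+k^2n\,\overline{v_s}=-k^2(n-1)(\Im\Phi+w_s)$ and \eqref{ps0} you get $\overline{v_s}=\psi+\mbox{error}$, not $-\psi$; the sign is absorbed when you take imaginary parts and does not affect the argument.
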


We remark that for the penetrable scatterers, $\psi(x,z)$ is again the scattering solution with the incoming field $\Im\Phi(x,z)$. Therefore we again expect the imaging function $\hat I(z)$ will have contrast on the boundary of the scatterer and decay outside the scatterer.

\section{Numerical examples}

In this section, we show several numerical experiments to illustrate the effectiveness of our RTM algorithm
with phaseless data in this paper. To synthesize the scattering data we compute the solution $u(x,x_s)$ of the scattering problem (\ref{p1})-(\ref{p3}) by standard Nystr\"{o}m's methods \cite{colton-kress}. The boundary integral equations on $\Ga_D$ are solved on a uniform mesh over the boundary with ten points per probe wavelength. The boundaries of the obstacles used in our numerical experiments are parameterized as follows,
where $\theta\in [0,2\pi]$,
\ben
%\mbox{Circle:}\ \ \ \ &&x_1=\rho\cos(\theta),\ \ x_2=\rho\sin(\theta),\\
\mbox{Kite: }&&x_1=\cos(\theta) + 0.65\cos(2\theta) - 0.65,\ \ x_2=1.5 \sin (\theta),\\
\mbox{$p$-leaf: }&&r(\theta)=1+0.2\cos(p\theta),\\
\mbox{Peanut: }&& x_1=\cos(\theta) + 0.2\cos(3\theta), \ \ x_2 = \sin(\theta) + 0.2\sin(3\theta),\\
\mbox{Rounded-square: } && x_1=\cos^3(\theta) + \cos(\theta) , \ \ x_2=\sin^3(\theta) + \sin(\theta).
\een

The sources $x_s$, $s=1,\cdots, N_s$, and the receivers $x_r$, $x_r=1,\cdots,N_r$, are uniformly distributed on $\Ga_{s}$
and $\Ga_r$, that is, $x_s=R_s(\cos \theta_s,\sin\theta_s),\theta_s=\frac{2\pi}{N_s}(s-1),s=1,2,...,N_s$,
and $x_r=R_r(\cos \theta_r,\sin\theta_r),\theta_r=\frac{2\pi}{N_r}(r-1)+\frac{\pi}{N_r},r=1,2,...,N_r$, so that $x_r\not= x_s$.

\begin{exmp}\label{ex1}
We consider the imaging of sound soft obstacles including a circle, a peanut, a kite and a rounded-square. The imaging domain is $\Om=(-3,3)\times(-3,3)$ with the sampling mesh $201\times201$. The probe wave wavenumber $k=4\pi$, $N_s=N_r=128$, and $R_s=R_r=10$.
\end{exmp}
\begin{figure}
    \centering
    \includegraphics[width=0.4\textwidth]{./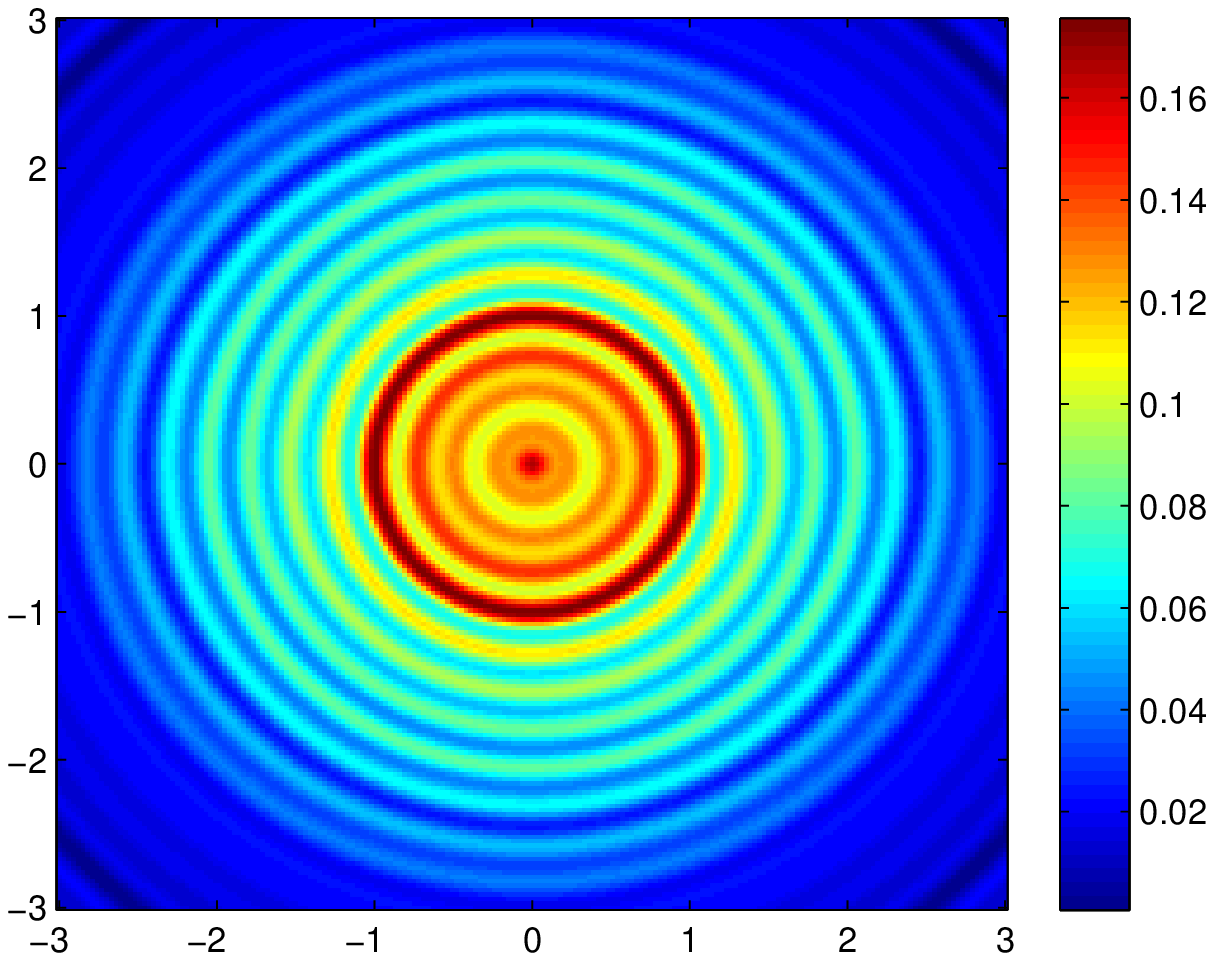}
    \includegraphics[width=0.4\textwidth]{./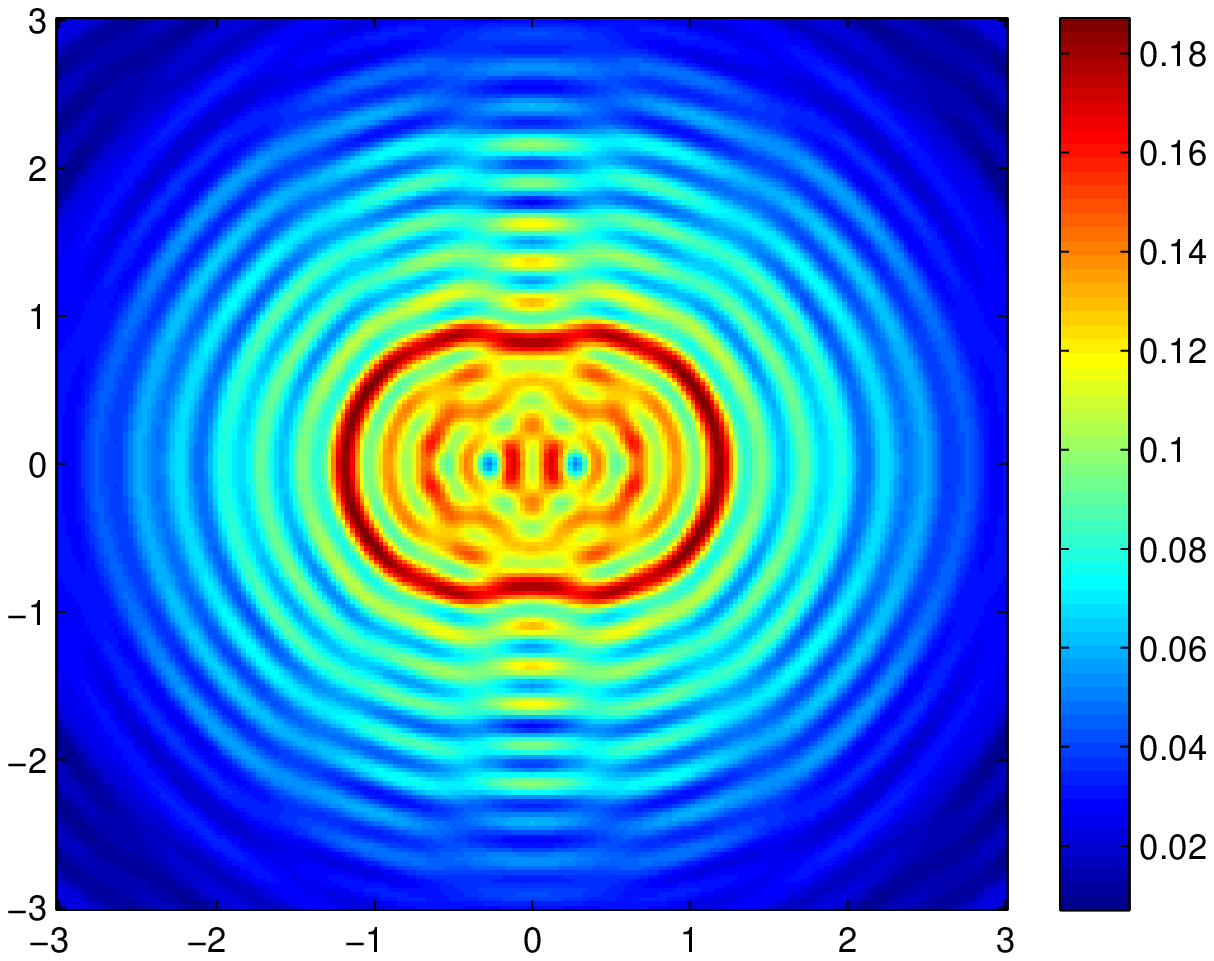} \\
        \includegraphics[width=0.4\textwidth]{./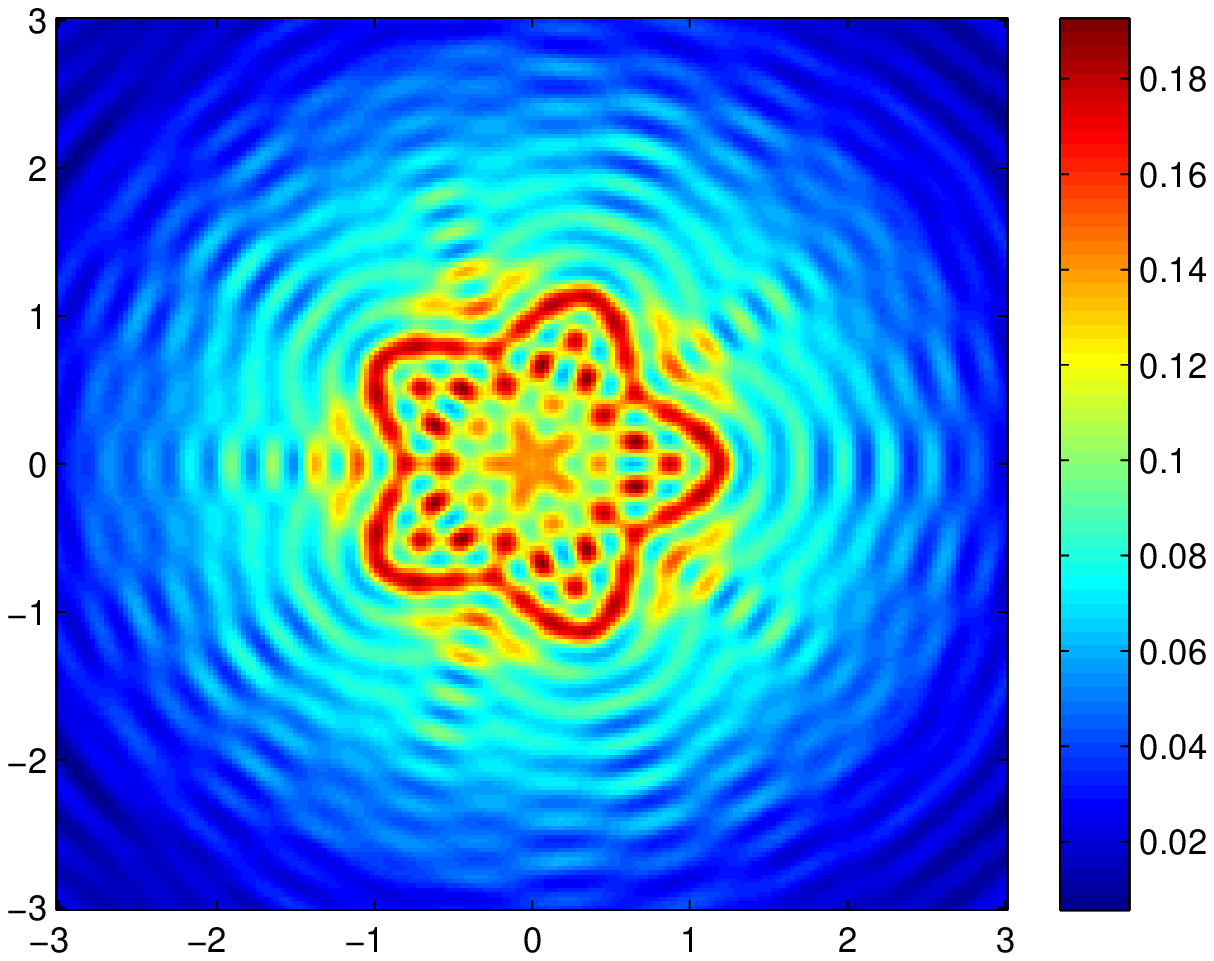}
    \includegraphics[width=0.4\textwidth]{./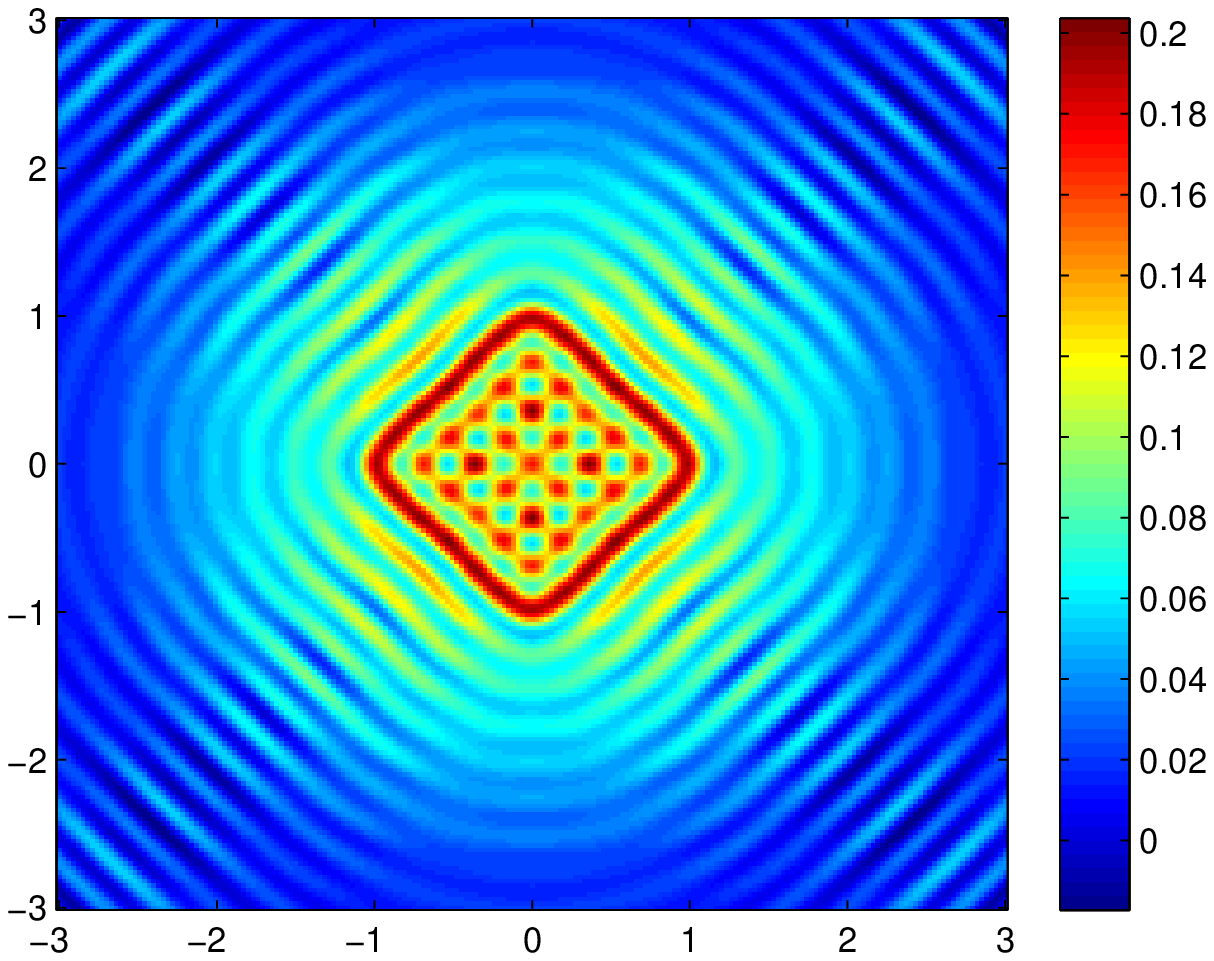}
    \caption{Example 5.1: Imaging results by RTM imaging function \eqref{scor2} with phaseless data. Top row: circle (left) and peanut (right);
     Bottom row: 5-leaf (left) and diamond (right). }\label{fig1}
\end{figure}

The imaging results are depicted in Figure \ref{fig1} which show clearly that our imaging algorithm can find
the shape and the location of the obstacles using phaseless data regardless of the shapes of the obstacles.

\begin{exmp}
We consider the imaging of a 5-leaf obstacle with impedance condition $\eta=5$, a partially coated obstacle
 with $\eta=5$ in the upper boundary and $\eta=1$ in the lower boundary,  a sound hard, and a penetrable obstacle with $n(x)=0.25$. The imaging domain is $\Om=(-3,3)\times(-3,3)$ with the sampling grid $201\times201$. The probe wave wavenumber $k=4\pi$, $N_s=N_r=128$, and $R_s=R_r=10$.
\end{exmp}

\begin{figure}
    \centering
    \includegraphics[width=0.4\textwidth]{./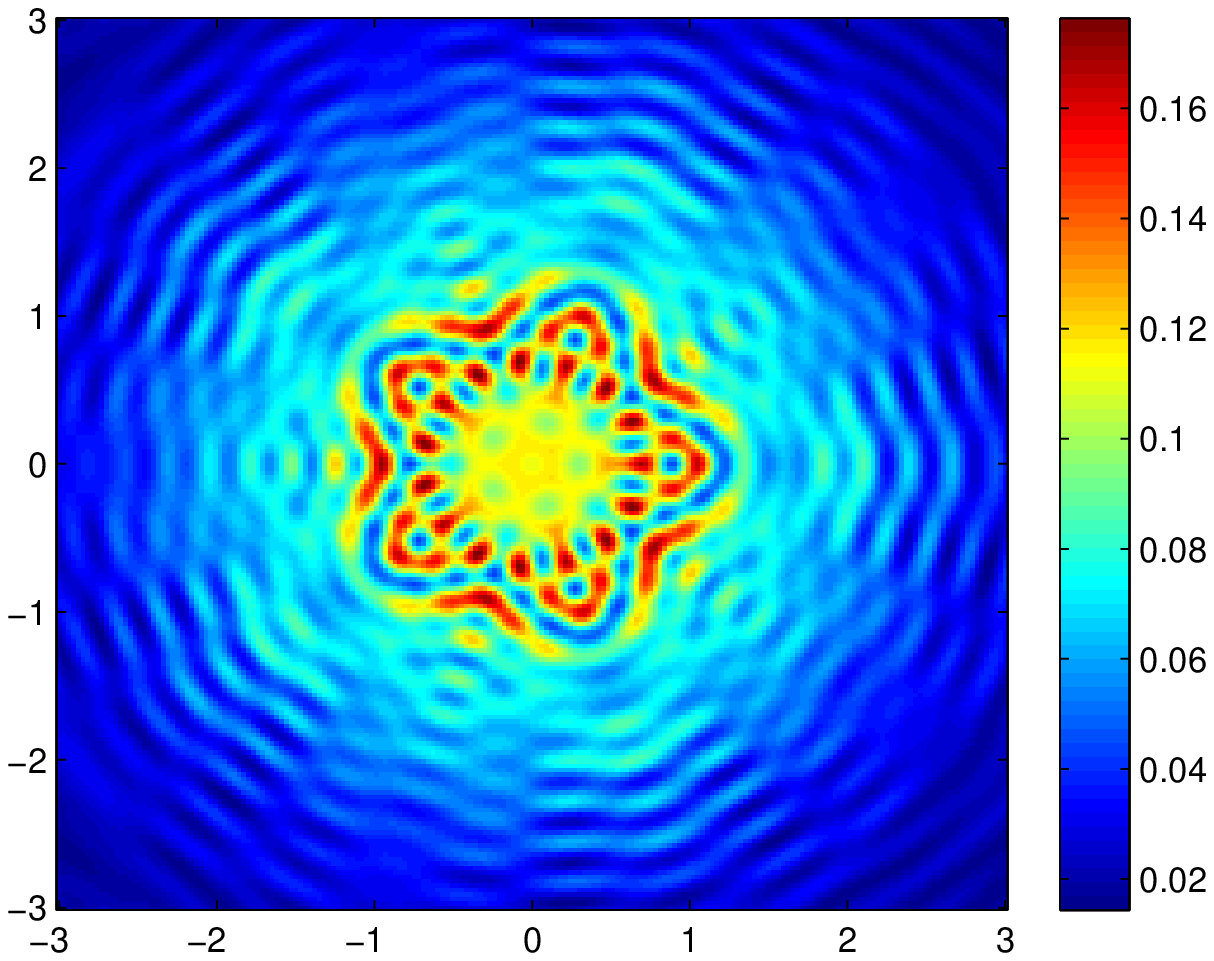}
    \includegraphics[width=0.4\textwidth]{./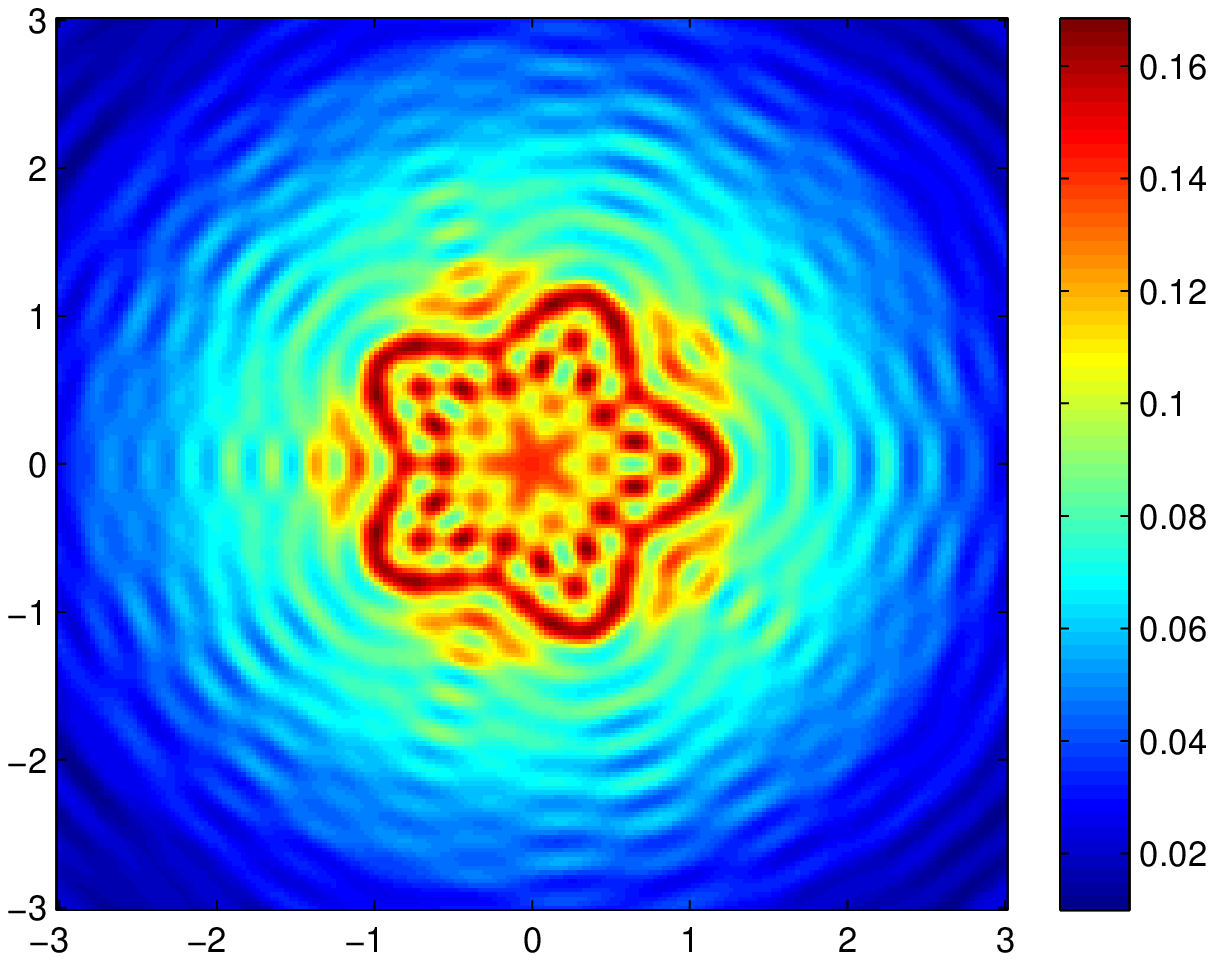}\\
   \includegraphics[width=0.4\textwidth]{./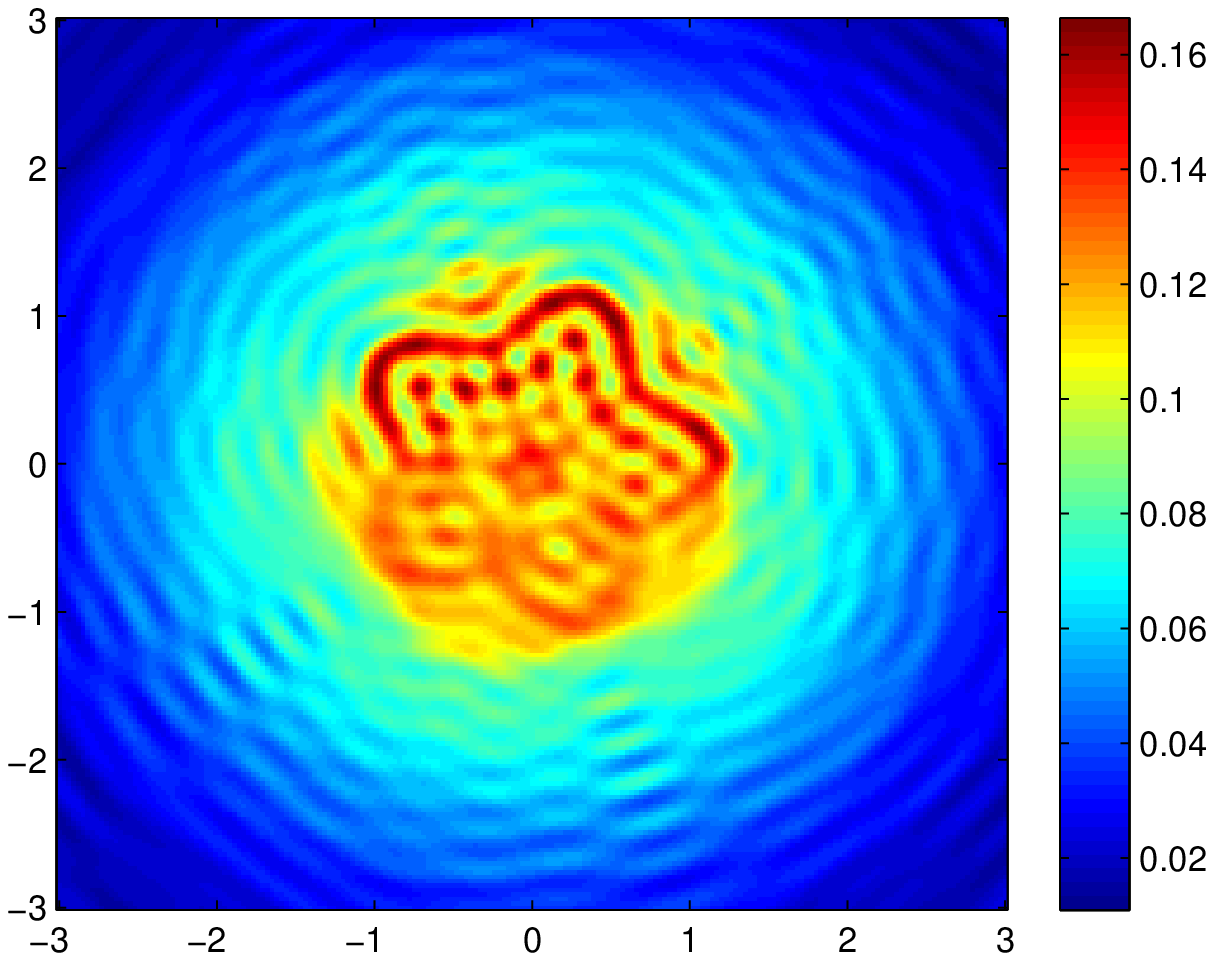}
    \includegraphics[width=0.4\textwidth]{./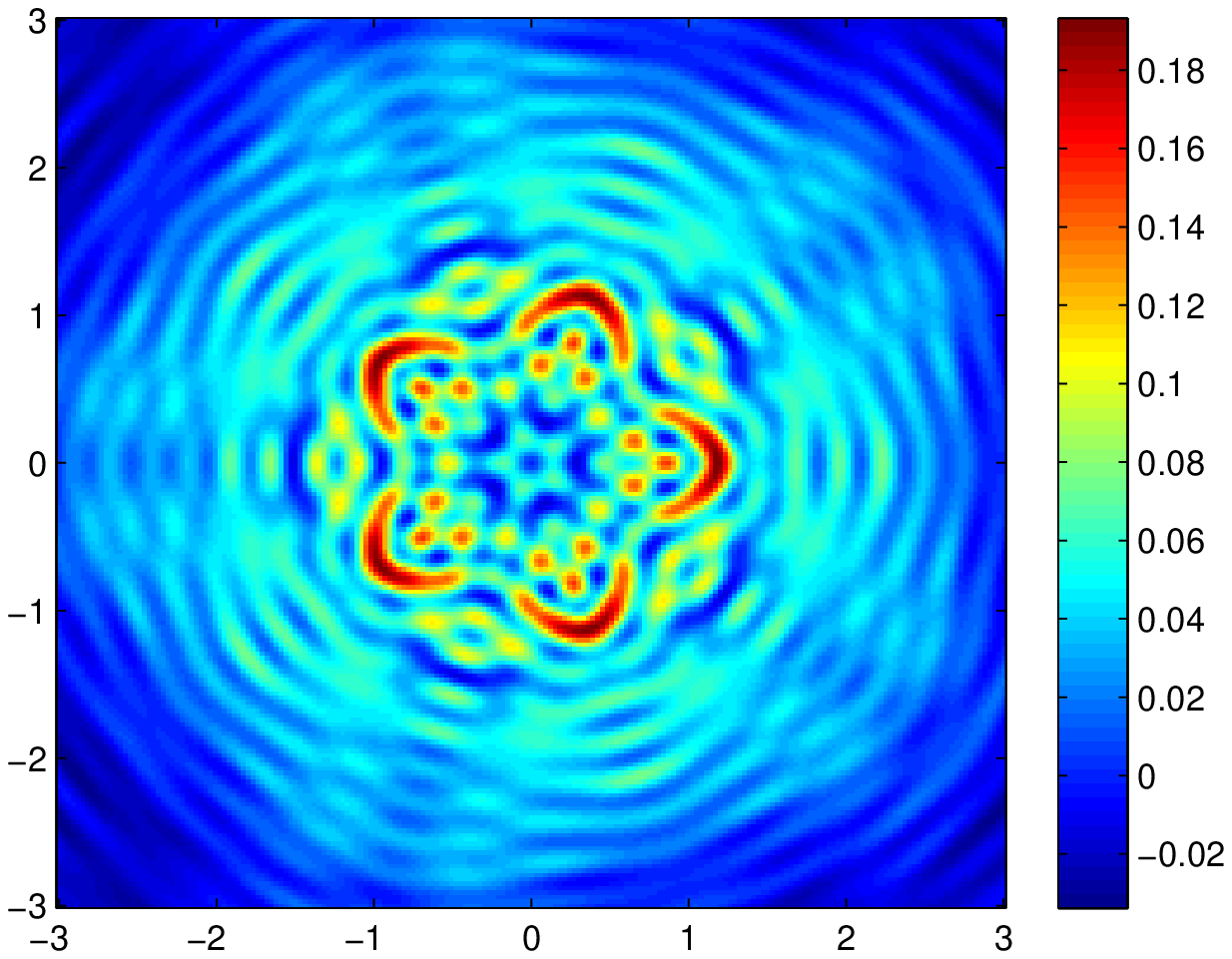}
    \caption{Example 5.2: In the top row,  a sound hard 5-leaf obstacle (left) and a non-penetrable obstacle with the impedance $\eta=5$ (right). In the bottom row, a partially coated obstacle with $\eta=5$ on the upper boundary and $\eta=1$ on the lower
    boundary (left) and a penetrable obstacle with $n(x)=1/4$ (right).} \label{fig2}
\end{figure}

Figure \ref{fig2} shows the imaging results which demonstrate clearly that our imaging algorithm
works for different types of obstacles without using any a prior information of the physical properties of the obstacles.

\begin{exmp}
We consider the stability of the imaging function with respect to the  additive Gaussian random noises
using the phaseless data. We introduce the additive Gaussian noise as follows (see e.g. \cite{cch_a}):
    \begin{equation*}
        |u|_{\rm noise} = |u| + \nu_{\rm noise},
    \end{equation*}
where $|u|$ is the synthesized phaseless total field and $\nu_{\rm noise}$ is the Gaussian noise with mean zero and standard deviation $\mu$ times the maximum of  the data $|u|$, i.e. $\nu_{\rm noise}=\mu \max|u|\eps$, and $\eps ~ \thicksim \mathcal{N}(0,1)$. \par
\end{exmp}
\begin{figure}
    \centering
    \subfigure[]{
        \includegraphics[width=0.37\textwidth]{./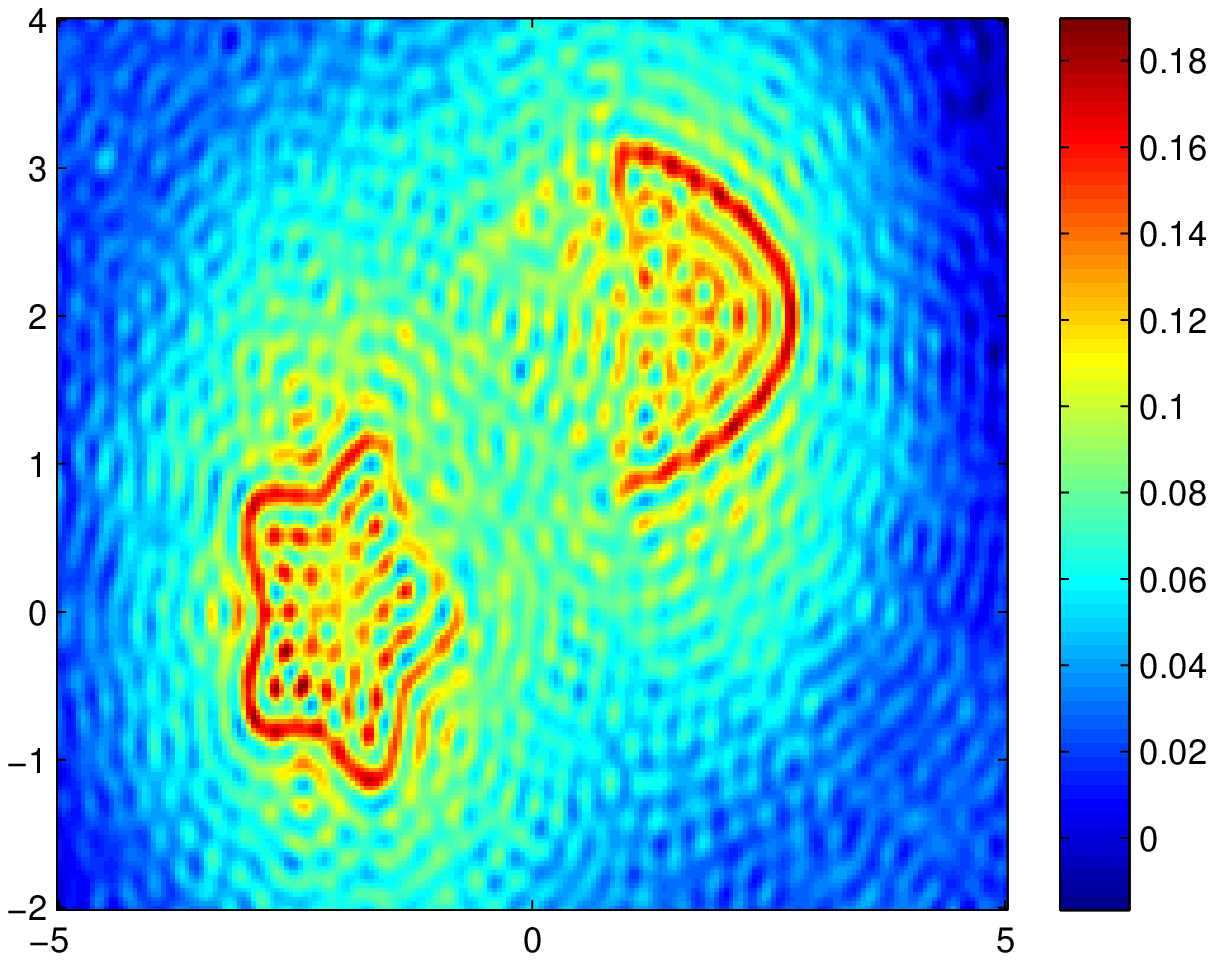}
        }
        \subfigure[]{
        \includegraphics[width=0.37\textwidth]{./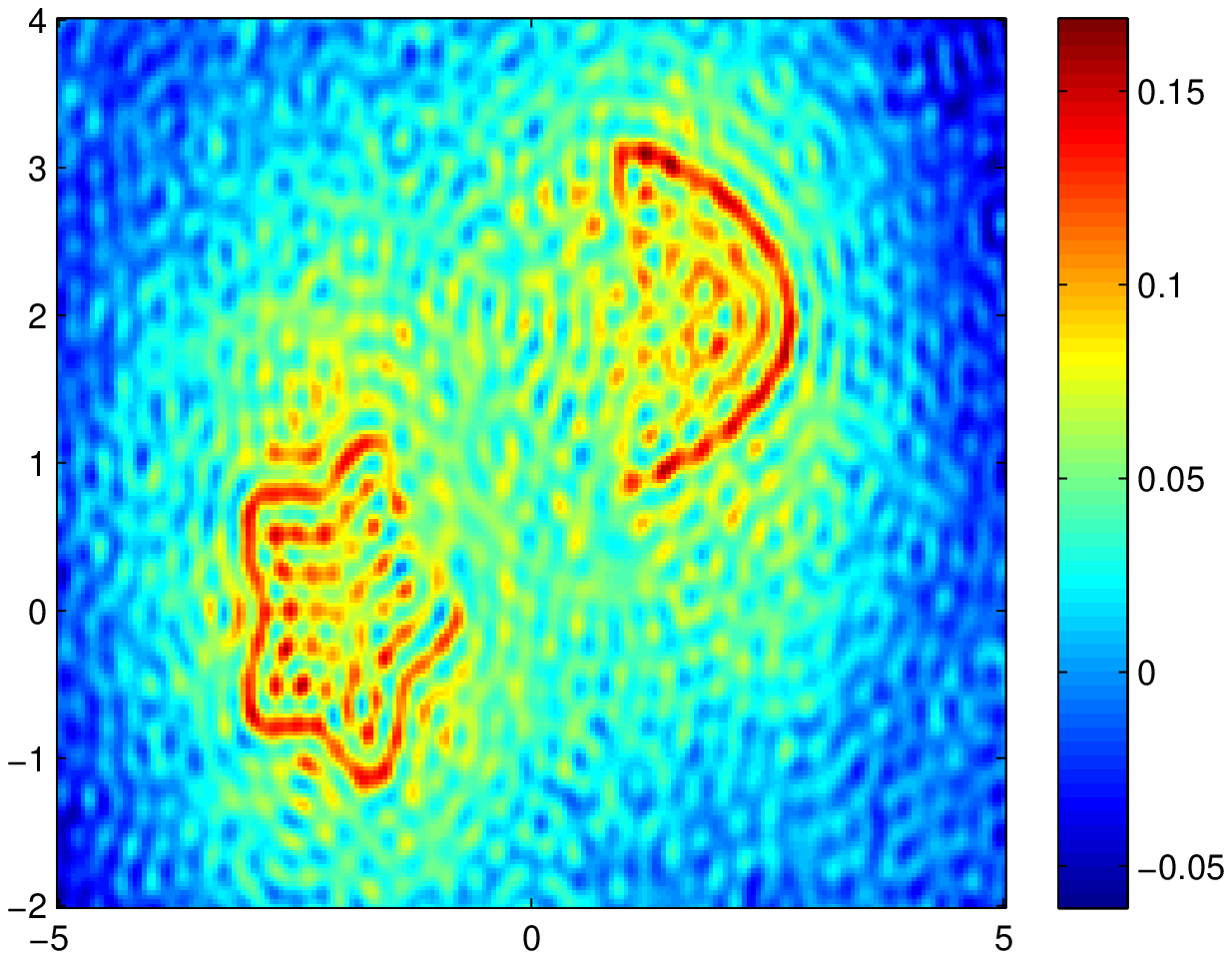}}\\
        \subfigure[]{
        \includegraphics[width=0.37\textwidth]{./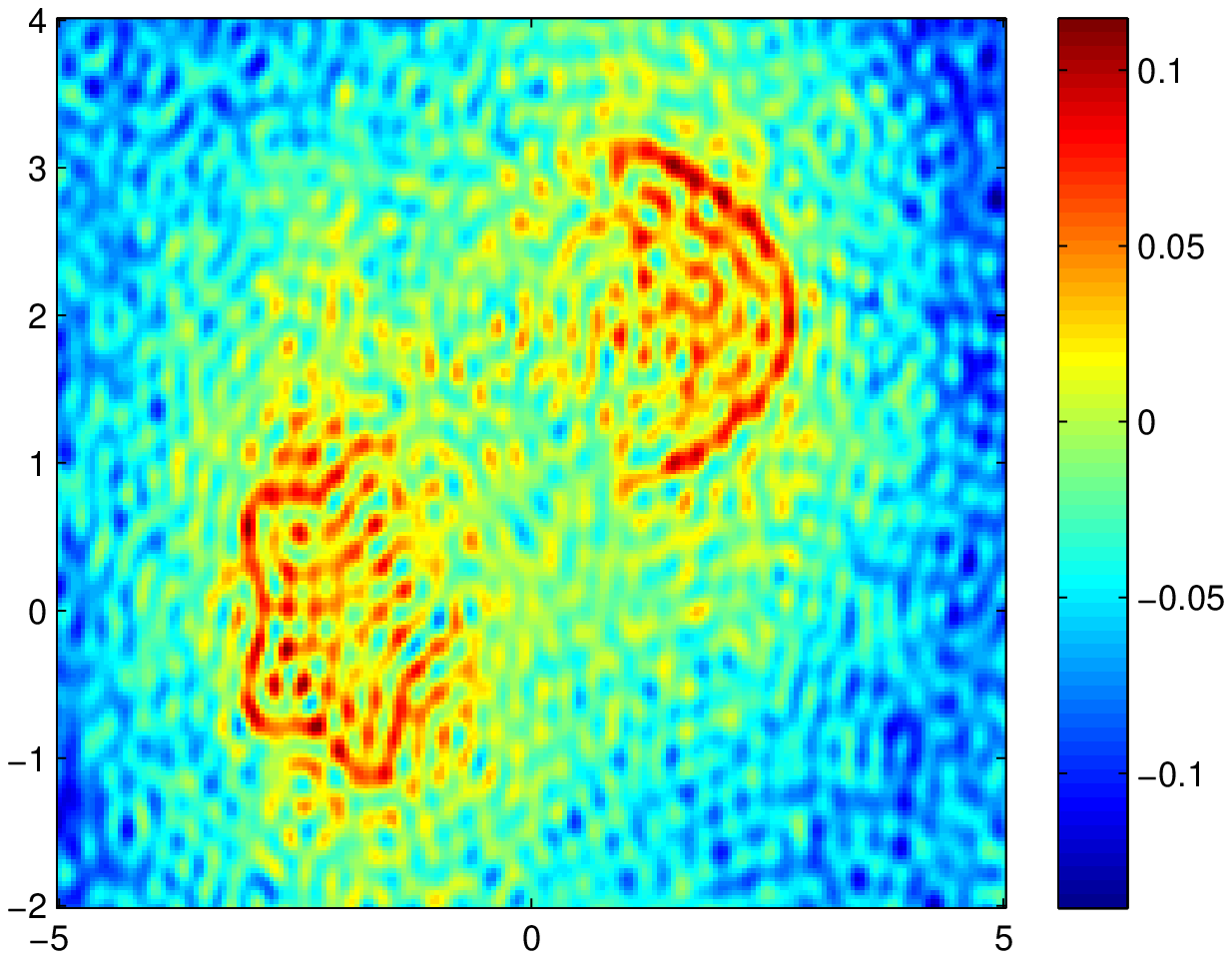}
        }
        \subfigure[]{
        \includegraphics[width=0.37\textwidth]{./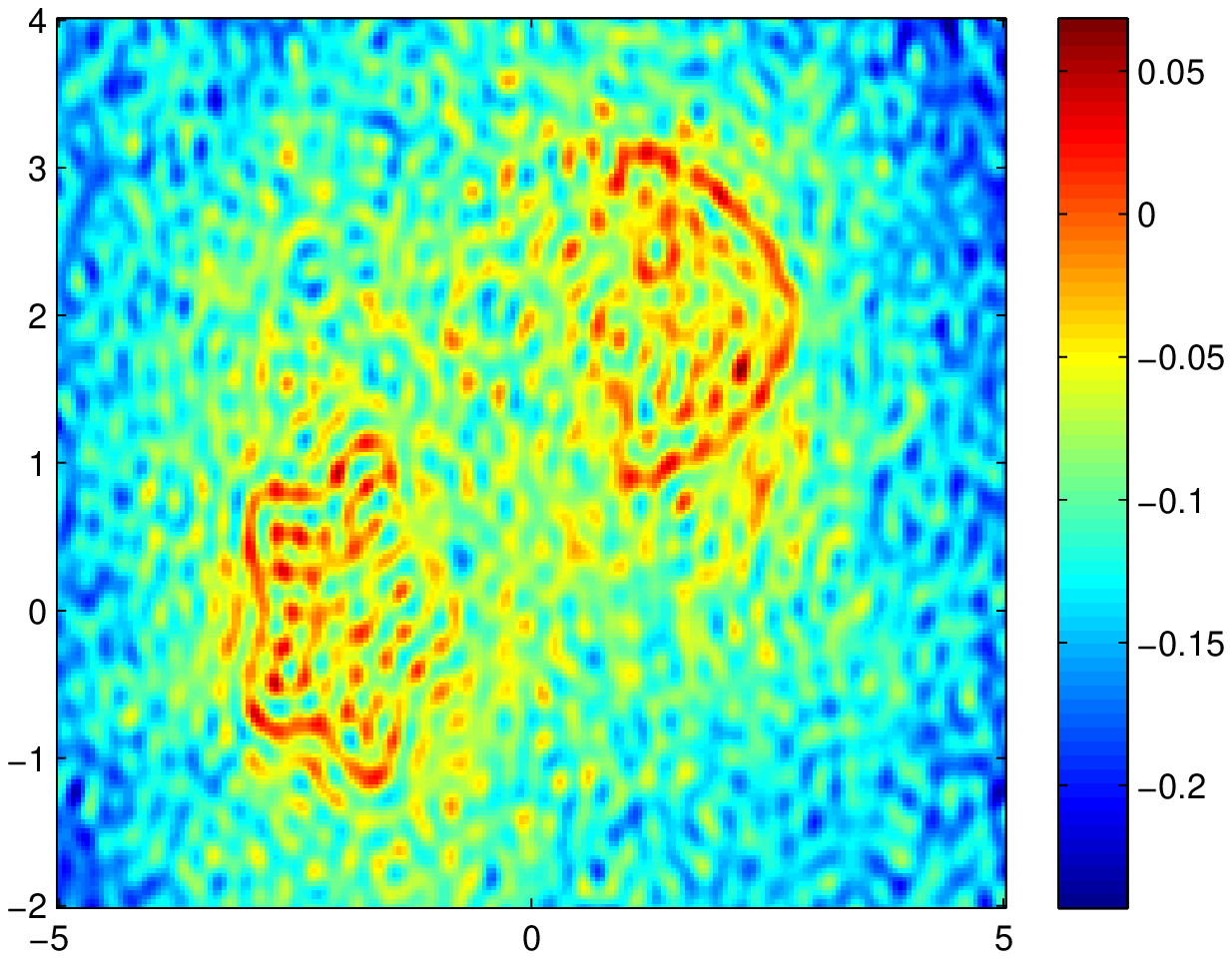}
        }
    \caption{Example 5.3: The imaging results using single frequency data added with additive Gaussian noise $\mu=10\%, 20\%, 30\%, 40\%$ from $(a)$ to $(d)$, respectively. The probe wavelength is $\lam=0.5$ and the sampling number is $N_s=N_r=256$.}\label{fig3}
\end{figure}
\begin{figure}
    \centering
    \subfigure[]{\includegraphics[width=0.37\textwidth]{./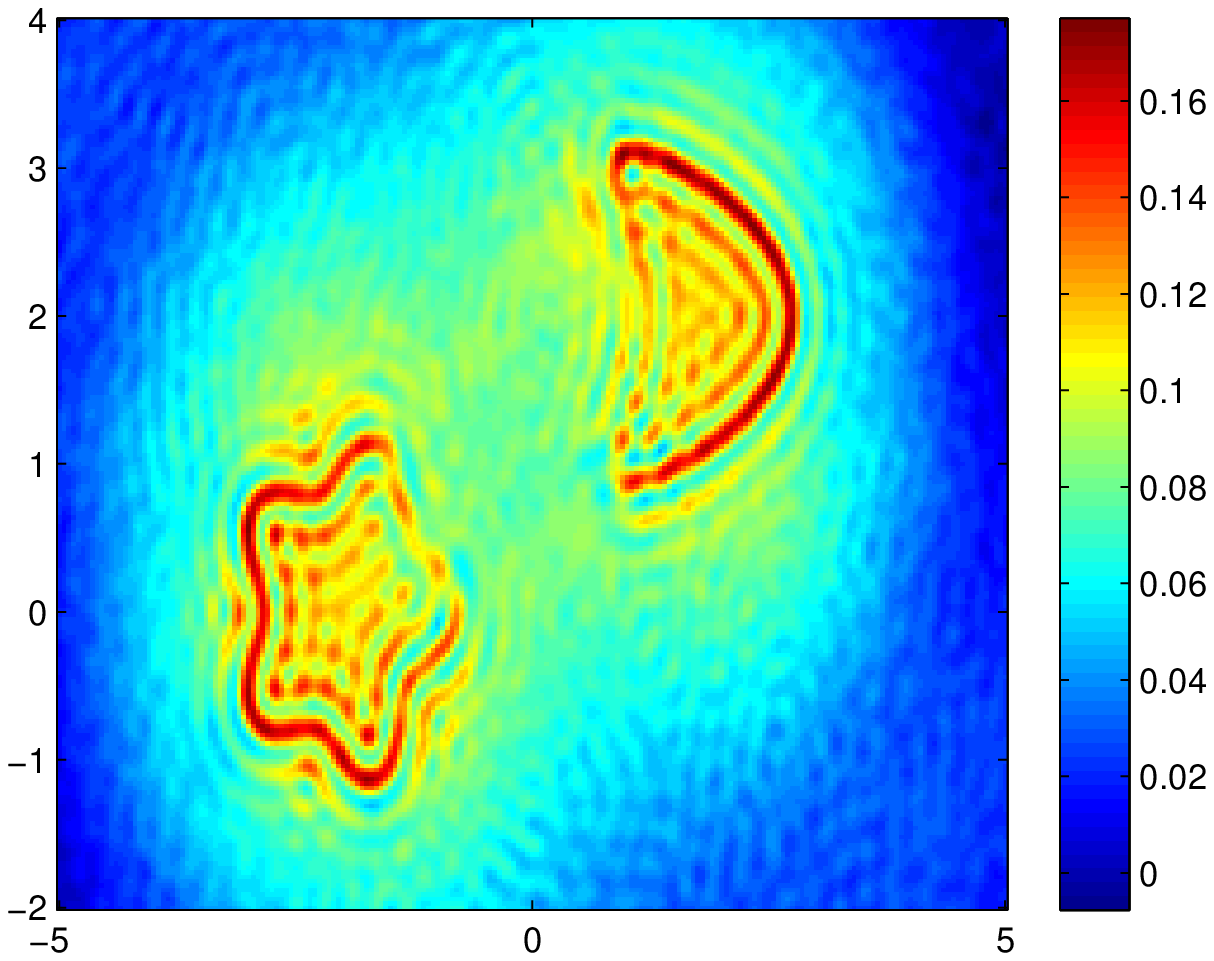}}
    \subfigure[]{\includegraphics[width=0.37\textwidth]{./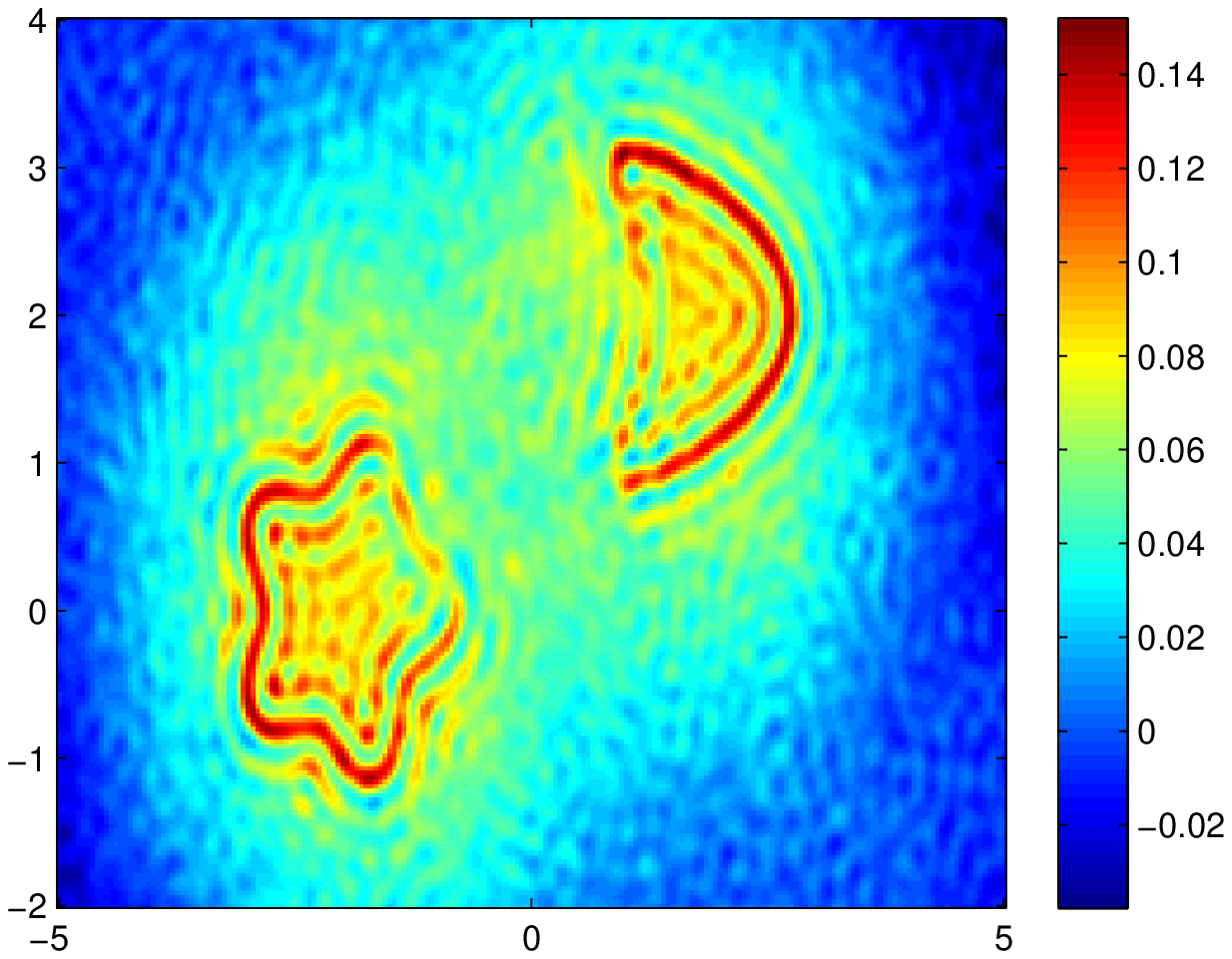}}\\
    \subfigure[]{\includegraphics[width=0.37\textwidth]{./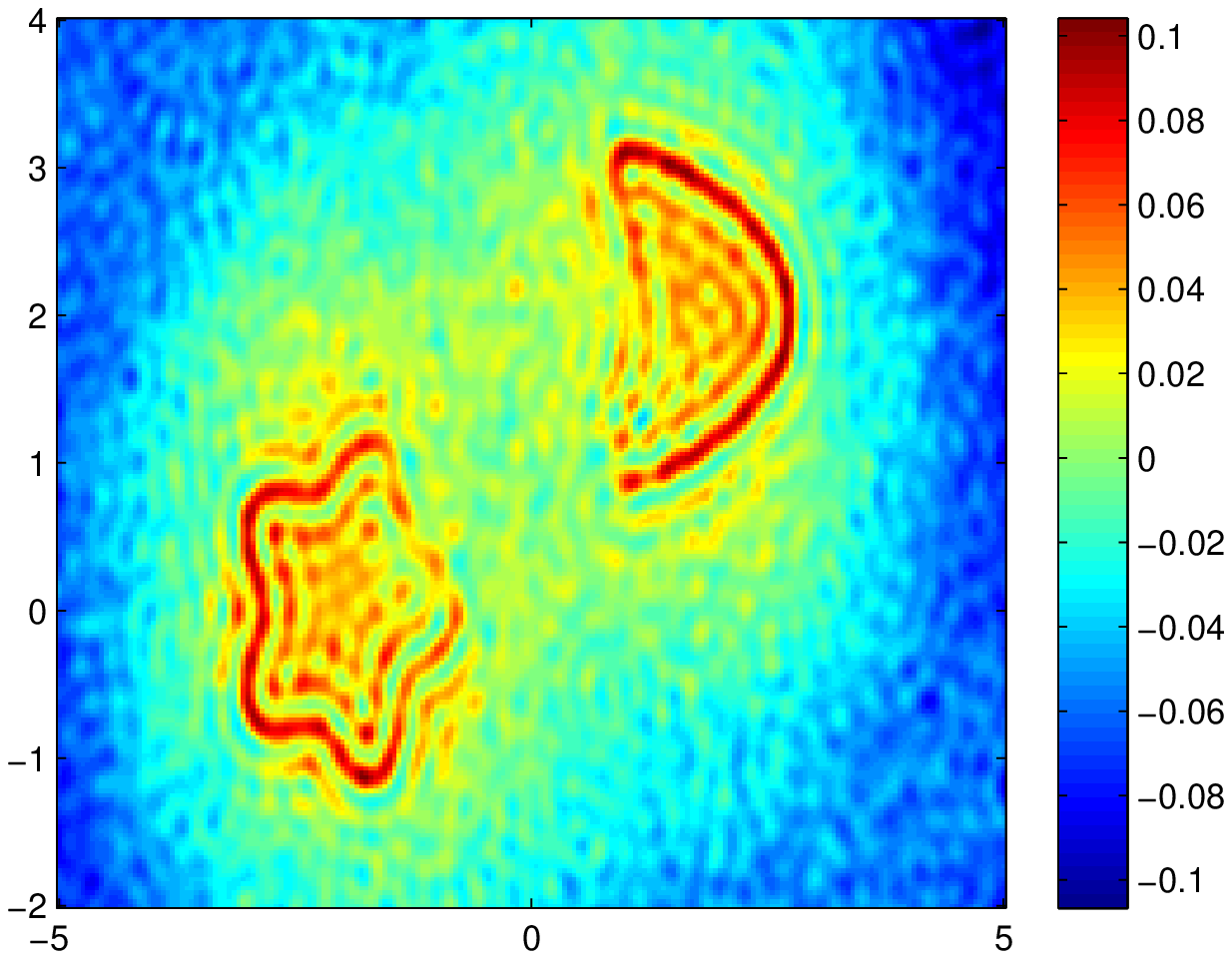}}
    \subfigure[]{\includegraphics[width=0.37\textwidth]{./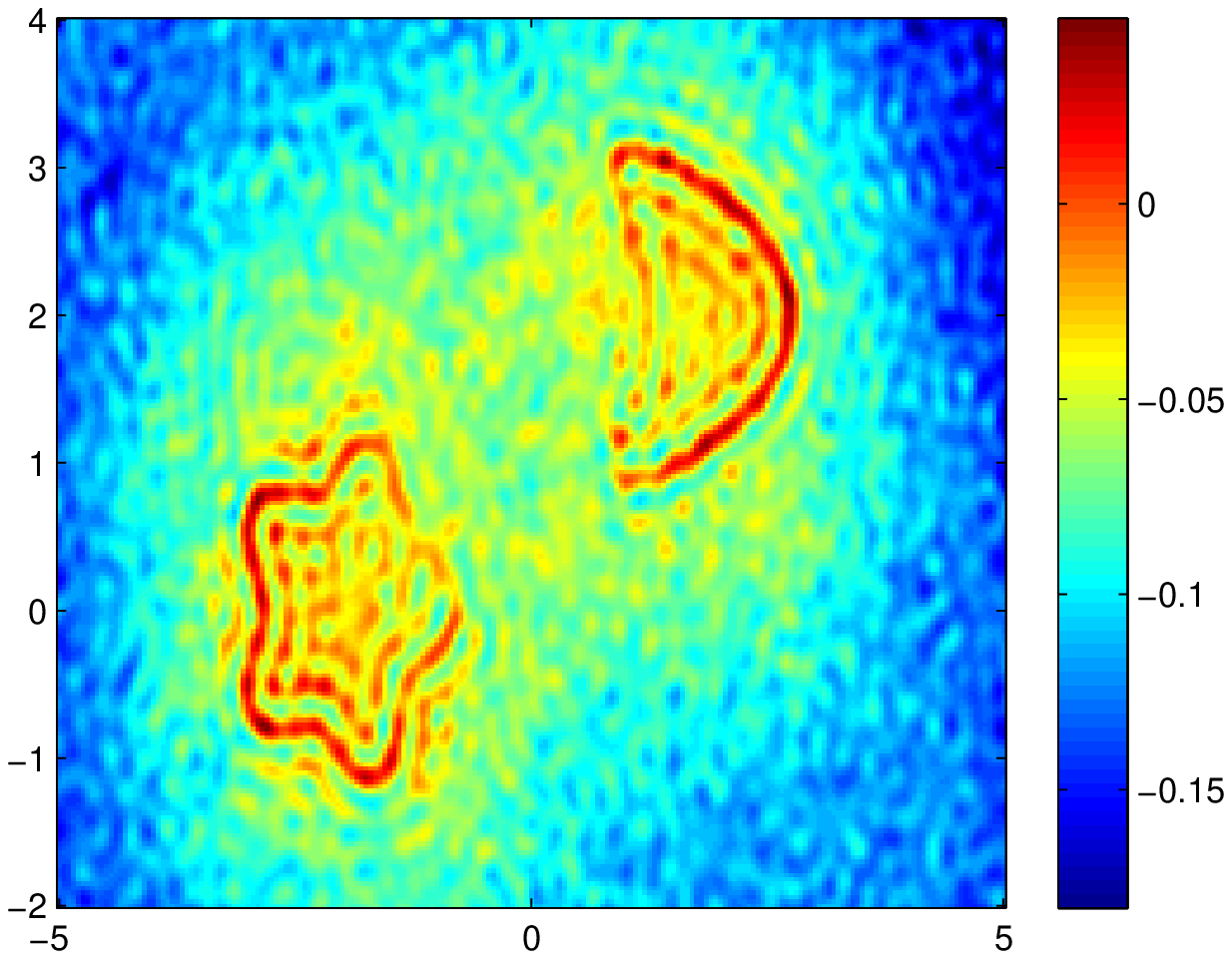}}
    \caption{Example 5.3: The imaging results using multi-frequency data added with additive Gaussian noise $\mu=10\%, 20\%, 30\%, 40\%$ from $(a)$ to $(d)$, respectively. The probe wavelengths are given by $\lam=1/1.8, 1/1.9, 1/2.0, 1/2.1,1/2.2$ and the sampling number is $N_s=N_r=256$.}\label{fig4}
\end{figure}
For the fixed probe wavenumber $k=4\pi$, we choose one kite and one 5-leaf in our test.  The search domain is $\Omega=(-5, 5)\times(-2,4)$ with a sampling $201\times 201$ mesh. We set $R_s=10, R_r=20$, and $N_s=N_r=256$.
Figure \ref{fig3} shows the imaging results for the noise level $\mu = 10\%, 20\%,30\%, 40\%$ in the single frequency data, respectively. The imaging results can be improved by superposing the multi-frequency imaging result as shown in Figure \ref{fig4}.
 The left table in Table \ref{table1} shows the noise level, where $\sigma=\mu \max_{x_r,x_s}|u(x_s,x_r)|$, $\|u\|_{\ell^2}^2=\frac{1}{N_sN_r}\sum^{N_s,N_r}_{s,r=1}|u(x_s.x_r)|^2$, $\|\nu_{\rm noise}\|_{\ell^2}^2 = \frac{1}{N_sN_r}\sum^{N_s,N_r}_{s,r=1}|\nu_{\rm noise}(x_s,x_r)|^2$.

\begin{table}[h]
\begin{center}
\begin{tabular}{ | c | c| c | c |  }
\hline
$\mu$ & $\sigma$ & $\|u\|_{\ell^2}$      & $\|\nu_{\rm noise}\|_{\ell^2}$ \\ \hline
0.1 &   0.003004    & 0.013017    & 0.003007   \\ \hline
0.2 &   0.006009    & 0.013017	  & 0.005996  \\ \hline
0.3 &   0.009013	& 0.013017    &	0.008964  \\ \hline
0.4 &   0.012018	& 0.013017    &	0.012008   \\ \hline
\end{tabular} \ \ \ \
\begin{tabular}{ | c | c| c | c |  }
\hline
$\mu$ & $\sigma$ & $\|u_s\|_{\ell^2}$      & $\|\nu_{\rm noise}\|_{\ell^2}$ \\ \hline
0.1&	0.002859    & 0.013054    & 0.002863\\ \hline
0.2&	0.005717    & 0.013054    & 0.005708\\ \hline
0.3&    0.008576    & 0.013054    & 0.008572\\ \hline
0.4&    0.011435    & 0.013054    & 0.011424\\ \hline
\end{tabular}
\end{center}
\caption{Example 5.3: The noise level in the case of single frequency data (left) and multi-frequency data (right).}\label{table1}
\end{table}

\end{document}